\documentclass[draft,reqno]{amsproc}
\usepackage{amsmath,amssymb,amsthm,euscript,mathrsfs,mathtools,microtype,needspace,tikz}
\usepackage{refcheck}
\usepackage{caption}
\usetikzlibrary{arrows.meta}
\tikzset{
  ain vertex/.style={circle,fill=black!8,draw=none,
    minimum size=7mm,inner sep=1pt,font=\normalsize},
  ain edge/.style={-{Stealth[length=4.5pt,width=3.5pt]},
    line width=.4pt},
  ain continuation/.style={ain edge,dash pattern=on 2pt off 2pt}
}

\numberwithin{equation}{section}

\newtheorem{thm}{Theorem}[section]
\newtheorem{lem}[thm]{Lemma}
\newtheorem{prop}[thm]{Proposition}

\newtheorem*{problema}{{\bf The ``moment property implies subnormality''
problem}}
\newtheorem*{problemb}{{\bf The injectivity problem}}
\theoremstyle{remark}
\newtheorem{remark}[thm]{Remark}

\DeclareMathOperator{\supp}{supp}

\DeclareMathOperator{\D}{d\hspace{-0.25ex}}
\newcommand{\Borel}{\mathfrak{B}}
\newcommand{\CC}{\mathrm{(CC)}}
\newcommand{\Dinf}{\mathcal{D}^{\infty}}
\newcommand*{\dz}[1]{{\EuScript D}(#1)}
\newcommand*{\gammab}{{\boldsymbol{\gamma}}}
\newcommand*{\gcal}{{\mathscr G}}
\newcommand*{\Ge}{\geqslant}
\newcommand*{\hfi}{h_{\phi}}
\newcommand*{\Le}{\leqslant}
\newcommand{\N}{\mathbb{N}}

\newcommand{\Rplus}{\mathbb{R}_{+}}

\newcommand{\Zp}{\mathbb{Z}_{+}}

\title[``Moment property implies
subnormality'' and injectivity problems] {Solutions to the ``moment
property implies subnormality'' and injectivity problems for
composition operators on one-circuit graphs}

\author[P. Budzy\'nski]{Piotr Budzy\'nski}
\address{Katedra Zastosowa\'n Matematyki, Uniwersytet Rolniczy
w Krakowie,\newline ul.~Balicka 253c, PL-30198 Krak\'ow, Poland}
\email{piotr.budzynski@ur.krakow.pl}

\author[Z. J. Jab{\l}o\'nski]{Zenon Jan Jab{\l}o\'nski}
\address{Instytut Matematyki, Uniwersytet Jagiello\'nski,\newline
ul.~{\L}ojasiewicza 6, PL-30348 Krak\'ow, Poland}
\email{Zenon.Jablonski@im.uj.edu.pl}

\author[I. B. Jung]{Il Bong Jung}
\address{Department of Mathematics, Kyungpook National University,\newline
Daegu 41566, Republic of Korea}
\email{ibjung@knu.ac.kr}

\author[J. Stochel]{Jan Stochel}
\address{Instytut Matematyki, Uniwersytet Jagiello\'nski,\newline
ul.~{\L}ojasiewicza 6, PL-30348 Krak\'ow, Poland}
\email{Jan.Stochel@im.uj.edu.pl}
\date{}
\keywords{Unbounded composition operator, Stieltjes moment sequence,
subnormality, hyponormality, one-circuit directed graph, injectivity,
Krein and Friedrichs measures}

\begin{document}

   \begin{abstract}
We consider composition operators in $L^2(\mu)$, where
$\mu$ is a discrete measure. Our main result shows
that, for every integer $p\Ge2$, there exists a
non-hyponormal composition operator that generates
Stieltjes moment sequences and whose symbol induces a
directed graph consisting of a circuit of length $p$
and one infinite branch. In contrast, if the symbol
induces a directed graph consisting of one loop and
one infinite branch, then the generation of Stieltjes
moment sequences does imply subnormality. These two
results completely solve the ``moment property implies
subnormality'' problem left open in [Adv.\ Math.\ 310
(2017), 484-556] for the entire family of one-branch
directed graphs with a circuit of arbitrary length.
Combining them with the examples in that paper shows
that, for directed graphs consisting of one loop and a
finite number of infinite branches, $2$ is the
smallest number of branches for which the moment
property does not imply subnormality. Finally, we
construct a noninjective composition operator that
generates Stieltjes moment sequences and whose symbol
induces a directed graph consisting of one loop, one
infinite branch, and one leaf. This operator gives a
negative answer to the injectivity problem posed in
the aforementioned paper. Both constructions use the
Krein and Friedrichs measures of an arbitrary
normalized S-indeterminate Stieltjes moment sequence.
   \end{abstract}

\maketitle

   \section{Preliminaries}
   \subsection{Introduction}
We study when the property of generating Stieltjes
moment sequences, which we call the \emph{moment
property}, implies subnormality for composition
operators in $L^2$-spaces. Recall that an operator $A$
in a complex Hilbert space $\mathcal{H}$ is said to
\emph{generate Stieltjes moment sequences} if
$\Dinf(A):=\bigcap_{n=0}^{\infty}\dz{A^n}$ is dense in
$\mathcal{H}$ and $\{\|A^nf\|^2\}_{n=0}^{\infty}$ is a
Stieltjes moment sequence for every $f\in\Dinf(A)$,
where $\dz{A^n}$ denotes the domain of $A^n$ and
$A^0=I$ (see \cite[Section~1.2]{BJJS17}). For bounded
operators, the implication under consideration follows
from Lambert's characterization \cite{Lam76}; see also
\cite[Theorem~7]{SS89}. For unbounded composition
operators, however, the moment property need not imply
even hyponormality. The counterexamples constructed in
\cite{JJS12} involve a directed tree with an
infinitely branching vertex. This led to the questions
of whether such counterexamples could occur on locally
finite (connected) directed graphs and, more
generally, how simple their underlying graphs could
be.

In \cite{BJJS17}, simplicity was considered in terms
of local valency, understood as the number of outgoing
edges at a vertex (see \cite[Section~1.1 and
Remark~3.2.2]{BJJS17}). Theorem~5.5.2 of that paper
provides examples of non-hyponormal composition
operators generating Stieltjes moment sequences on
$\gcal_{2,0}$, the directed graph consisting of one
loop and two infinite branches. Its branching vertex
has valency three, while all other vertices have
valency one. The case of one loop and a single
infinite branch was explicitly left open in
\cite[Section~1.1]{BJJS17}. To motivate the study of
this case, we first discuss graphs without branching.

In studying the subnormality problem, we focus on
directed graphs that admit subnormal composition
operators (with symbols inducing these graphs), so
that failure of subnormality is not forced by the
graph itself. Such graphs have no vertices of valency
zero (see \eqref{suxj} and Proposition~\ref{wkwh}; see
also Theorem~\ref{noninjective}). If every vertex has
valency one, the corresponding composition operator is
unitarily equivalent to a bilateral weighted shift or
a finite-dimensional cyclic weighted shift
\cite[Remark~3.2.2]{BJJS17}. In both cases, the moment
property implies subnormality \cite[Theorems~5
and~7]{SS89}. Thus, within this class of directed
graphs, a counterexample requires at least one
branching vertex.

We therefore investigate the implication within the
family of one-branch directed graphs with circuits of
varying length. More precisely, for an integer
$\kappa\Ge0$, let $\gcal_{1,\kappa}$ denote the
directed graph consisting of a circuit of length
$p:=\kappa+1$ and one infinite branch attached to a
circuit vertex (see Figures~\ref{fig:longcircuit}
and~\ref{fig:onebranch}). Each of these graphs has
exactly one vertex of valency two, with all other
vertices having valency one. Thus the circuit length
varies without changing the local valencies. We
consider a composition operator $C_{\phi_{1,\kappa}}$
in the Hilbert space $L^2(\mu)$, where $\mu$ is a
discrete measure on the vertex set $X_{1,\kappa}$ of
$\gcal_{1,\kappa}$. Its symbol is the self-map
$\phi_{1,\kappa}\colon X_{1,\kappa} \to X_{1,\kappa}$,
which acts as a one-step shift: cyclically on the
circuit of length $p$ and towards the circuit along
the infinite branch. The precise definitions are given
in Subsection~\ref{subsec:graphs}. This leads to our
first problem.
   \begin{problema}
For which integers $\kappa\Ge 0$ does the generation
of Stieltjes moment sequences imply subnormality for
composition operators $C_{\phi_{1,\kappa}}$ on
$\gcal_{1,\kappa}${\em ?}
   \end{problema}
A second question concerns a weaker consequence of
subnormality. Every subnormal composition operator is
injective \cite[Corollary~6.3]{BJJS14}, so it is
natural to ask whether the moment property alone
forces injectivity. This is the subject of the
following problem, posed in \cite{BJJS17} for general
$\sigma$-finite measure spaces.
   \begin{problemb}[{\cite[Problem~3.3.6]{BJJS17}}]
Suppose that $(X,\mathcal{A},\mu)$ is a $\sigma$-finite measure
space, $\phi\colon X \to X$ is a nonsingular self-map, and the
composition operator $C_\phi$ in $L^2(\mu)$ generates Stieltjes
moment sequences. Must $C_\phi$ be injective?
   \end{problemb}
We refer to Subsection~\ref{subsec:moments} for the moment-theoretic
terminology and to Subsection~\ref{subsec:discrete} for further
operator-theoretic definitions and the notation $h_{\phi^n}$ for the
iterated Radon-Nikodym derivatives.

Our main result settles the first problem in the
negative for every $\kappa\Ge1$.
   \begin{thm} \label{cnyz}
For every integer \(\kappa\Ge 1\), there exists a
discrete measure \(\mu\) on \(X_{1,\kappa}\) such that
the composition operator \(C_\phi\) in \(L^2(\mu)\)
with \(\phi=\phi_{1,\kappa}\) has the
following~properties{\em :}
   \begin{enumerate}
   \item[\textup{(i)}]
\(C_\phi\) generates Stieltjes moment sequences,
   \item[\textup{(ii)}]
\(C_\phi\) is not hyponormal and, consequently, is not
subnormal.
   \end{enumerate}
   \end{thm}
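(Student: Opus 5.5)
We build $\mu$ explicitly on $X_{1,\kappa}$ and use the discrete description of $C_\phi$ together with the iterated Radon--Nikodym derivatives $h_{\phi^n}$. Label the circuit vertices $c_0,\dots,c_\kappa$, with $\phi(c_j)=c_{j+1}$ indices mod $p=\kappa+1$, and label the branch vertices $b_1,b_2,\dots$, with $\phi(b_1)=c_0$ and $\phi(b_{i+1})=b_i$.

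In $L^2(\mu)$ with $\mu$ discrete, $C_\phi$ generates Stieltjes moment sequences as soon as, for every vertex $x$, the sequence $\{h_{\phi^n}(x)\}_{n\Ge0}$ is a Stieltjes moment sequence. Here $h_{\phi^n}(x)=\mu(\phi^{-n}(\{x\}))/\mu(\{x\})$, and the density of $\Dinf$ is automatic once all $h_{\phi^n}$ are finite. This holds because $\|C_\phi^n f\|^2=\int |f|^2 h_{\phi^n}\D\mu$, and a positive combination of Stieltjes moment sequences is again one. Hyponormality of $C_\phi$ is characterized by a local inequality at each vertex, in the spirit of the known criterion $\sum_{y\in\phi^{-1}(x)}\mu(y)^2/\mu(\phi^{-1}(\{y\}))\Le \mu(x)$ type conditions. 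The only vertex with two preimages is $c_0$, whose preimages are $c_\kappa$ and $b_1$, so non-hyponormality must be forced there.

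The inverse images are computed as follows. For $x=b_i$ we have $\phi^{-n}(\{b_i\})=\{b_{i+n}\}$, so $h_{\phi^n}(b_i)=\mu(b_{i+n})/\mu(b_i)$. For $x=c_j$, the set $\phi^{-n}(\{c_j\})$ contains one circuit vertex together with branch vertices $b_m$, where $m\Le n$ and $m\equiv n-j \pmod p$ (with suitable indexing). So, writing $\mu(b_m)=t_m$ and $\mu(c_j)=s_j$, the moment conditions become the following:
\begin{enumerate}
\item[(a)] $\{t_{m+1+n}/t_{m+1}\}_n$ must be Stieltjes for each $m$, which holds if $t_n=\int x^n\,\D\nu(x)$ for a single measure $\nu$ that is determinate enough (or it suffices to use shifted sequences);
\item[(b)] for each $j$, the sequence $n\mapsto s_{j-n}+\sum_{m\Le n,\ m\equiv n-j}t_m$ must be Stieltjes.
\end{enumerate}
The plan for (b) is to take $t$ built from an S-indeterminate normalized sequence, as the keywords suggest. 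We need measures $\nu$ representing $t$ whose structure allows the cyclic sums $\sum_{k} t_{n-j-kp}$ to be rewritten as moments. Formally, $\sum_{k\Ge0} x^{-kp}$ applied to $x^{n-j}$ gives $x^{n-j}/(1-x^{-p})$ plus truncation terms; those truncation terms are exactly what the circuit weights $s_j$ must absorb. I expect to choose $\nu$ to be an N-extremal measure with support in $(1,\infty)$ (the Krein/Friedrichs measures, rescaled). The required sequence is then $\int x^{n}\,x^{-j}(1-x^{-p})^{-1}\D\nu$ plus a periodic term, and the periodic term is a moment sequence of a measure on the $p$-th roots of unity. It will be Stieltjes provided that the resulting atom weights at the point $1$ and the remaining combination are nonnegative. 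The key point is that, by indeterminacy, the same $t$ admits different representing measures. This freedom lets us satisfy (b) with some $s_j>0$ while keeping $s_\kappa$ as small as we like.

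Non-hyponormality is then arranged at $c_0$. The hyponormality inequality there involves $\mu(c_\kappa)^2/\mu(\phi^{-1}(c_\kappa))$ and $t_1^2/t_2$. Since $\mu(\phi^{-1}(c_\kappa))=s_{\kappa-1}$ (this needs $\kappa\Ge1$, so that $c_\kappa$ has no branch preimage), we can aim to make $s_\kappa^2/s_{\kappa-1}+t_1^2/t_2>s_0$. The main obstacle is to choose $s_0,\dots,s_\kappa$ so that they are simultaneously admissible in (b) and violate this inequality. I would first try taking the $s_j$ to be the minimal values allowed by the Friedrichs (minimal-mass-at-zero) representation for the $j$ with $j\ne\kappa$, and the Krein-side values for $j=\kappa$. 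Indeterminacy, via a strict gap between the two extremal measures, should then give the strict violation. For $\kappa=0$ the loop has $c_\kappa=c_0$, and this mechanism collapses, which is consistent with the contrasting result for $\gcal_{1,0}$.
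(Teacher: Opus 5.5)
Your reduction is fine. The moment property reduces to each local sequence $\{h_{\phi^n}(x)\}_{n=0}^\infty$ being Stieltjes (Lemma~\ref{wkws}), and the branching vertex $c_0$ (the paper's $x_\kappa$), with the inequality $s_\kappa^2/s_{\kappa-1}+t_1^2/t_2>s_0$, is exactly where the paper breaks hyponormality. The genuine gap is that no measure $\mu$ is ever produced, and the mechanisms you sketch for (b) cannot produce a counterexample.

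Suppose the branch masses are moments of $\nu$ on $(1,\infty)$, and the Stieltjes property of the circuit sequence is read off from positivity of the decomposition ``$\int x^n(x^p-1)^{-1}\D\nu$ (up to index shift) plus a periodic term''. Then the weights at the $p$-th roots of unity other than $1$, which lie off $\Rplus$, must vanish, so the periodic term is a nonnegative atom at $1$. The circuit sequence then has an S-representing measure supported in $[1,\infty)$, and Proposition~\ref{dirze} makes $C_\phi$ subnormal. Replacing $\nu$ by another representing measure of $t$ on $(1,\infty)$ only changes the periodic term, so indeterminacy of $t$ by itself is not the lever.

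Two further points do not hold up. First, the circuit sequences are all shifts $\gamma_{n+r}/\gamma_r$ of the single sequence $\gamma_n:=h_{\phi^n}(c_\kappa)$, so the $s_j$ cannot be taken from different extremal measures for different $j$. Second, $s_{\kappa-r}=\gamma_r s_\kappa$ and $t_{n+1}=(\gamma_{n+p}-\gamma_n)s_\kappa$, so all of $\mu$ scales with $s_\kappa$ and ``keeping $s_\kappa$ small'' has no effect. What actually breaks the inequality is $\gamma_1=s_{\kappa-1}/s_\kappa<1$ together with $\gamma_\kappa=s_0/s_\kappa<1$.

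The missing idea is how to get $\gamma_1<1$ while keeping both $\gamma$ and $\{\gamma_{n+p}-\gamma_n\}$ Stieltjes. Since $\gamma_0=1>\gamma_1$, every S-representing measure $\rho$ of $\gamma$ charges $[0,1)$. Hence $(t^p-1)\rho$ is a genuinely signed measure, and you need its moments to coincide with those of a positive measure; this is where indeterminacy must enter.

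The paper transports the Krein and Friedrichs measures by $t\mapsto q+st$ with $q\in(0,1)$ and $s$ large. Then $\alpha$ carries the atom $m\delta_q$, while $\beta$ lives in $(1,\infty)$. It sets $\rho=R^{-1}\bigl((t^p-1)^{-1}\beta+\frac{m}{1-q^p}\delta_q\bigr)$. Now $(t^p-1)\rho=R^{-1}(\beta-m\delta_q)$ has the same moments as the positive measure $R^{-1}(\alpha-m\delta_q)$, which therefore represents the branch data. Moreover $\gamma_r\to q^r$ as $s\to\infty$, which gives $\gamma_1,\gamma_\kappa<1$ and hence $\frac{1}{\gamma_\kappa}\bigl(\frac{1}{\gamma_1}+\frac{D_0^2}{D_1}\bigr)>1$ at the branching vertex. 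The realization procedure \eqref{realization} then yields $\mu$.

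So indeterminacy is used in the form ``the branch moments plus the moments of an atom below $1$ admit a second representing measure supported above $1$.'' That is the step your proposal does not supply.
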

Each of the underlying graphs in Theorem~\ref{cnyz}
has exactly one branching vertex of valency two, while
all other vertices have valency one. By Lambert's
characterization of subnormality, the corresponding
composition operators are necessarily unbounded. They
are also injective, since their symbols are surjective
(see \eqref{suxj}). In particular, neither infinite
branching nor failure of injectivity is needed for the
moment property to coexist with non-hyponormality. The
proof of Theorem~\ref{cnyz} is given in
Subsection~\ref{sec:long}. The underlying directed
graph is shown in Figure~\ref{fig:longcircuit}.

   \begin{figure}[htbp]
   \centering
   \begin{tikzpicture}[x=1cm,y=1cm]
\node[ain vertex] (b0) at (-2.10,0) {$x_0$}; \node[ain vertex] (bk)
at (0,0) {$x_\kappa$}; \node[ain vertex] (b1) at (1.65,0)
{$x_{1,1}$}; \node[ain vertex] (b2) at (3.30,0) {$x_{1,2}$};
\node[ain vertex] (b3) at (4.95,0) {$x_{1,3}$};

\draw[ain edge] (bk.north) .. controls (0,1.27) and (-2.10,1.27) ..
(b0.north);

\draw[ain continuation] (b0.south) .. controls (-2.10,-1.27) and
(0,-1.27) .. (bk.south); \node at (-1.05,-1.25) {$\kappa$ edges};

\draw[ain edge] (bk) -- (b1); \draw[ain edge] (b1) -- (b2); \draw[ain
edge] (b2) -- (b3); \draw[ain continuation] (b3) -- (6.05,0);
   \end{tikzpicture}
\caption{\label{fig:longcircuit}The directed graph
$\gcal_{1,\kappa}$ in Theorem~\ref{cnyz}, where
$\kappa\Ge1$, consists of a circuit of length
$\kappa+1$ and one infinite branch. The arrows
$(\phi(y),y)$ point in the direction opposite to the
action of~$\phi$.}
   \end{figure}

When the circuit is a loop, the conclusion is
different. In fact, subnormality can be characterized
by the moment property at only two vertices: the loop
vertex $x_0$ and the first branch vertex $x_{1,1}$.
   \begin{thm} \label{kezo}
Let $\mu$ be a discrete measure on $X_{1,0}$ and let
$\phi=\phi_{1,0}$. Then the following conditions are
equivalent for the composition operator $C_\phi$ in
$L^2(\mu)${\em :}
   \begin{enumerate}
   \item[\textup{(i)}]
$C_\phi$ is subnormal,
   \item[\textup{(ii)}]
$C_\phi$ generates Stieltjes moment sequences,
   \item[\textup{(iii)}]
$\{h_{\phi^n}(x_0)\}_{n=0}^{\infty}$ and
$\{h_{\phi^n}(x_{1,1})\}_{n=0}^{\infty}$ are Stieltjes
moment sequences.
   \end{enumerate}
   \end{thm}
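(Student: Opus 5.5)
The implications (i)$\Rightarrow$(ii)$\Rightarrow$(iii) are standard, so the real content is (iii)$\Rightarrow$(i).

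\emph{(i)$\Rightarrow$(ii).} Every subnormal operator $A$ with $\Dinf(A)$ dense generates Stieltjes moment sequences; here $\|A^nf\|^2=\int t^{2n}\,\D\langle E(t)f,f\rangle$ for a normal extension. For composition operators with discrete $\mu$, $\Dinf(C_\phi)$ contains all characteristic functions of points, hence is dense, as recalled in \cite{BJJS17}. For the loop case we may also simply invoke \cite[Section~1.2]{BJJS17}.

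\emph{(ii)$\Rightarrow$(iii).} Apply the moment property to $f=\chi_{\{x\}}$. Then $\|C_\phi^n\chi_{\{x\}}\|^2=\mu((\phi^n)^{-1}(\{x\}))=h_{\phi^n}(x)\mu(\{x\})$, so $\{h_{\phi^n}(x)\}_{n}$ is a Stieltjes moment sequence for every $x$, in particular for $x_0$ and $x_{1,1}$.

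\emph{(iii)$\Rightarrow$(i).} I would use the general criterion from \cite{BJJS17} for subnormality of composition operators on discrete spaces. It requires a family $\{\nu_x\}_{x\in X}$ of Borel probability measures on $\Rplus$ satisfying the consistency condition
\begin{equation*}
\mu(\{x\})\,\nu_x(\sigma)=\sum_{y\in\phi^{-1}(\{x\})}\int_\sigma \frac{1}{t}\,\mu(\{y\})\,\D\nu_y(t),\quad \sigma\in\Borel(\Rplus),
\end{equation*}
with the case $\nu_x(\{0\})>0$ treated as in \cite{BJJS17}. The construction proceeds as follows.
\begin{enumerate}
\item[1.] On the branch, $\phi^{-1}(\{x_{1,k}\})=\{x_{1,k+1}\}$. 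Hence $h_{\phi^n}(x_{1,k})=\mu(x_{1,k+n})/\mu(x_{1,k})$, and these sequences are shifts of the one at $x_{1,1}$, up to normalization: $h_{\phi^{n}}(x_{1,k})=h_{\phi^{n+k-1}}(x_{1,1})/h_{\phi^{k-1}}(x_{1,1})$. Choose a representing measure $\nu$ of the sequence at $x_{1,1}$ and set $\D\nu_{x_{1,k}}=t^{k-1}\D\nu/h_{\phi^{k-1}}(x_{1,1})$. The consistency condition along the branch is then automatic.
\item[2.] At the loop vertex, $\phi^{-1}(\{x_0\})=\{x_0,x_{1,1}\}$. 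Writing $a=\mu(x_0)$ and $b=\mu(x_{1,1})$, the condition reads
\begin{equation*}
a\,\nu_{x_0}=\frac{a}{t}\,\nu_{x_0}+\frac{b}{t}\,\nu_{x_{1,1}},\quad\text{i.e.}\quad a(t-1)\,\nu_{x_0}=b\,\nu_{x_{1,1}}.
\end{equation*}
\item[3.] The recursion $h_{\phi^{n+1}}(x_0)=h_{\phi^n}(x_0)+\frac{b}{a}h_{\phi^n}(x_{1,1})$ gives $\Delta s_n=\frac{b}{a}r_n$, where $s_n=h_{\phi^n}(x_0)$ and $r_n=h_{\phi^n}(x_{1,1})$. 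The task is therefore to find representing measures $\nu_{x_0}$ for $(s_n)$ and $\nu$ for $(r_n)$ with $\nu_{x_0}$ supported in $[1,\infty)$ and $(t-1)\,\D\nu_{x_0}=\frac{b}{a}\,\D\nu$.
\end{enumerate}

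\emph{Main obstacle: indeterminate sequences.} If $(r_n)$ is determinate, everything is forced. Given any representing measure $\rho$ of $(s_n)$, the measure $(t-1)\,\D\rho$ represents $\frac{b}{a}(r_n)$, so by determinacy it is positive and equals $\frac{b}{a}\nu$. Positivity forces $\rho$ to live on $[1,\infty)$, up to the point $t=1$ where the factor vanishes, and this yields the required pair. In general $(r_n)$ may be indeterminate, and then the difficulty is to match representing measures. I would start from $\nu$ representing $(r_n)$ and define
\begin{equation*}
\nu_{x_0}=c\,\delta_1+\frac{b}{a}\,\frac{\D\nu(t)}{t-1}\Big|_{(1,\infty)}.
\end{equation*}
This requires $\nu(\{t\Le 1\})=0$ and $\int \frac{1}{t-1}\,\D\nu<\infty$, and its moments must recover $s_n$. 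Both facts should follow from the hypothesis that $(s_n)$ is Stieltjes. Take $\rho$ representing $(s_n)$. Because $\Delta s_n=\frac{b}{a}r_n$, the signed measure $(t-1)\,\D\rho$ represents $\frac{b}{a}(r_n)$. The hard step is showing that $\rho$ can be chosen with $\rho([0,1))=0$. I would attempt this using the fact that $s_n$ is nondecreasing with $s_n\Le s_{n+1}$, together with an extremal choice of $\rho$: take the measure in the indeterminate family with minimal mass near $0$, namely the Friedrichs or Krein-type extremal solution, mirroring the tools named in the abstract. Once such a $\rho$ is found, set $\nu_{x_0}=\rho$ and $\nu=\frac{a}{b}(t-1)\rho$. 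The branch construction in step 1 then closes the argument, and the criterion from \cite{BJJS17} gives subnormality.
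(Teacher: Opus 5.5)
Your arguments for (i)$\Rightarrow$(ii)$\Rightarrow$(iii) are fine and match the paper. Your reduction of (iii)$\Rightarrow$(i) is also right: once you have an S-representing measure $\rho$ of $s_n=h_{\phi^n}(x_0)$ with $\supp(\rho)\subseteq[1,\infty)$, the measure $\frac{a}{b}(t-1)\,\D\rho$ and its $t^{k-1}$-weighted normalizations give a consistent family, which is exactly Proposition~\ref{dirze}. But the existence of such a $\rho$ is the whole content of (iii)$\Rightarrow$(i), and you do not prove it; you only say you ``would attempt'' it, and the hints you give do not work as stated.

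\begin{itemize}
\item Monotonicity of $(s_n)$ is far too weak. The measure $\rho=\frac12(\delta_{1/2}+\delta_{10})$ gives a strictly increasing, determinate Stieltjes moment sequence with no representing measure on $[1,\infty)$.
\item Of the extremal solutions you name, the Krein measure is the wrong one: in the S-indeterminate case it always has an atom at $0$ by \eqref{krein-gap}.
\item The Friedrichs measure lives on $[1,\infty)$ only after you show that multiplication by $t$ on polynomials in $L^2(\rho)$ is bounded below by $1$, and you never show this.
\item Your determinate-case argument is invalid. Determinacy of $(r_n)$ means uniqueness among positive measures, so it does not make the signed measure $(t-1)\,\D\rho$ positive just because its moments are $\frac{b}{a}r_n$.
\end{itemize}

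The missing idea is the Hankel-form support criterion \eqref{support-hankel}. Since $s_{n+1}-s_n=\frac{b}{a}r_n$ and $(r_n)$ is a Stieltjes, hence Hamburger, moment sequence, you get $\sum_{i,j=0}^N(s_{i+j+1}-s_{i+j})\lambda_i\overline{\lambda_j}=\frac{b}{a}\sum_{i,j=0}^N r_{i+j}\lambda_i\overline{\lambda_j}\Ge0$ for all $\lambda_0,\ldots,\lambda_N\in\mathbb{C}$. The Hankel forms of $(s_n)$ itself are nonnegative because $(s_n)$ is Stieltjes. Together these are precisely \eqref{support-hankel}, which by \cite[Lemma~2.1.6]{BJJS17} (a translated form of Stieltjes' theorem) yields a representing measure of $(s_n)$ on $[1,\infty)$. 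This is the paper's one-line proof via Proposition~\ref{dirze}(iii).

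Note that it uses the full positive definiteness of the difference sequence, not mere positivity of the differences. It needs no case distinction on determinacy and no matching of representing measures of $(s_n)$ and $(r_n)$. For any representing measure $\rho$ of $(s_n)$, the left-hand side above equals $\int(t-1)|p(t)|^2\,\D\rho(t)$ with $p(t)=\sum_i\lambda_it^i$. So this inequality is exactly the lower bound that would make your Friedrichs-extension route go through.
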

No determinacy assumption is imposed on either moment
sequence in Theorem~\ref{kezo} (cf.\
\cite[Theorem~41]{BJJS15}). This theorem settles the
case $\gcal_{1,0}$ left open in \cite{BJJS17} and
establishes the optimality of the lower bound of two
branches in \cite[Theorem~5.5.2]{BJJS17} within the
one-loop family. The proof of Theorem~\ref{kezo} is
given in Subsection~\ref{sec:loop}. The corresponding
directed graph is shown in Figure~\ref{fig:onebranch}.

   \begin{figure}[htbp]
   \centering
\begin{tikzpicture}[x=1.5cm,y=1.5cm]
\node[ain vertex] (a0) at (0,0) {$x_0$}; \node[ain
vertex] (a1) at (1.65,0) {$x_{1,1}$}; \node[ain
vertex] (a2) at (3.30,0) {$x_{1,2}$}; \node[ain
vertex] (a3) at (4.95,0) {$x_{1,3}$};

\draw[ain edge] (a0.north west) .. controls
(-.85,1.12) and (.85,1.12) .. (a0.north east);
\draw[ain edge] (a0) -- (a1); \draw[ain edge] (a1) --
(a2); \draw[ain edge] (a2) -- (a3); \draw[ain
continuation] (a3) -- (6.05,0);
\end{tikzpicture}
\caption{\label{fig:onebranch}The directed graph
$\gcal_{1,0}$ in Theorem~\ref{kezo} consists of one
loop and one infinite branch. The arrows $(\phi(y),y)$
point in the direction opposite to the action
of~$\phi$.}
   \end{figure}
Together, Theorems~\ref{cnyz} and~\ref{kezo}
completely solve the first problem. Although the
directed graphs $\gcal_{1,\kappa}$, $\kappa \Ge 0$,
have the same local valencies, the answer changes when
the loop is replaced by a longer circuit.

For longer circuits, additional moment-theoretic
assumptions can restore subnormality.
Proposition~\ref{cpws} shows that, if the sequences of
iterated Radon-Nikodym derivatives at $x_0$ and
$x_{1,1}$ are Stieltjes moment sequences, then
S-determinacy of the subsequence at $x_0$ indexed by
multiples of the circuit length $p$ implies
subnormality. This result and the corresponding
sufficient conditions involving the Carleman condition
are discussed in Subsection~\ref{sec:det}.

Our next result gives a negative solution to the
injectivity problem.
   \begin{thm} \label{noninjective}
There exist a discrete measure \(\mu\) on a set \(X\)
and a self-map $\phi\colon X\to X$ such that the
composition operator \(C_\phi\) in \(L^2(\mu)\)
generates Stieltjes moment sequences but is not
injective.
   \end{thm}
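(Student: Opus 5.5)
The plan is to build the simplest possible example: a one-loop, one-branch graph with one extra leaf. Non-injectivity will come from the leaf. The moment property will come from checking, vertex by vertex, that the sequences $\{h_{\phi^n}(x)\}_{n=0}^\infty$ are Stieltjes moment sequences, with the branch weights tuned so that the loop vertex also works.

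\emph{Construction.} Let $X=\{x_0,z\}\cup\{x_{1,j}\colon j\in\N\}$. Define $\phi(x_0)=x_0$, $\phi(z)=x_0$, $\phi(x_{1,1})=x_0$ and $\phi(x_{1,j+1})=x_{1,j}$. Let $\mu$ be the discrete measure with atoms $\mu(x_0)=1$, $\mu(z)=c>0$ and $\mu(x_{1,j})=b_j>0$, where $c$ and $b_j$ are to be chosen. All atoms are positive, so $\phi$ is nonsingular. Since $z\notin\phi(X)$, the indicator $\chi_{\{z\}}$ is a nonzero element of $L^2(\mu)$ with $C_\phi\chi_{\{z\}}=\chi_{\{z\}}\circ\phi=0$, so $C_\phi$ is not injective. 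Every set $\phi^{-n}(\{x\})$ is finite, so $h_{\phi^n}(x)=\mu(\phi^{-n}(\{x\}))/\mu(\{x\})<\infty$. Hence all finitely supported functions lie in $\Dinf(C_\phi)$, and $\Dinf(C_\phi)$ is dense.

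\emph{Reduction to vertices.} For $f\in\Dinf(C_\phi)$ we have
\[
\|C_\phi^nf\|^2=\sum_{x\in X}|f(x)|^2h_{\phi^n}(x)\mu(\{x\}).
\]
The set of Stieltjes moment sequences is a convex cone closed under pointwise convergence, because it is characterized by positivity of the Hankel forms $\sum a_{i+j}\lambda_i\bar\lambda_j$ and $\sum a_{i+j+1}\lambda_i\bar\lambda_j$. It therefore suffices to show that each sequence $\{h_{\phi^n}(x)\}_{n=0}^\infty$ is a Stieltjes moment sequence. The vertices are handled as follows.
\begin{itemize}
\item At the leaf, $h_{\phi^n}(z)$ equals $1$ for $n=0$ and $0$ for $n\ge1$. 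These are the moments of $\delta_0$.
\item On the branch, $h_{\phi^n}(x_{1,k})=b_{k+n}/b_k$. Choosing $b_j=\int t^j\,\D\nu(t)$ for a measure $\nu$ on $[0,\infty)$ makes this sequence the moment sequence of $b_k^{-1}t^k\,\D\nu$.
\item At the loop vertex, $a_n:=h_{\phi^n}(x_0)=1+c+\sum_{j=1}^n b_j$.
\end{itemize}

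\emph{Main step: the loop vertex.} We need $\{a_n\}$ to be a Stieltjes moment sequence. Look for $\rho$ with $a_n=\int t^n\,\D\rho$. Since $a_n-a_{n-1}=b_n$ for $n\ge1$, it suffices to have $(t-1)\,\D\rho=t\,\D\nu$ on the support together with the correct total mass $a_0$. I would take $\nu$ supported in $[2,\infty)$, for instance $\nu=\delta_2$, so that $b_j=2^j$. Then set
\[
\rho=\tfrac{t}{t-1}\,\nu+\alpha\delta_1,\qquad \alpha\ge0.
\]
The atom at $1$ contributes nothing to the differences $\int t^{n-1}(t-1)\,\D\rho$. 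Hence $\int t^n\,\D\rho-\int t^{n-1}\,\D\rho=\int t^n\,\D\nu=b_n$ for all $n\ge1$.

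It remains to match $a_0$. With $\nu=\delta_2$ we have $\int t/(t-1)\,\D\nu=2$, so we take $\alpha>0$ and $c=1+\alpha$, which gives $a_0=1+c=\int\D\rho$. Both sequences satisfy the same recurrence and agree at $n=0$, so $a_n=\int t^n\,\D\rho$ for every $n$, and $C_\phi$ generates Stieltjes moment sequences.

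\emph{Where care is needed.} The potential pitfall is an atom of $\rho$ at $0$. Such an atom breaks the recurrence at $n=1$, because the moments of $\delta_0$ are $1,0,0,\dots$, so the extra mass must be placed at $t=1$ instead. The other point to justify carefully is the passage from finitely supported $f$ to general $f\in\Dinf(C_\phi)$. The series converges termwise for each $n$, and the limit stays in the cone by the closedness noted above.
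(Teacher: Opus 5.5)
There is a genuine gap at the loop vertex. Your formula $a_n=h_{\phi^n}(x_0)=1+c+\sum_{j=1}^n b_j$ holds only for $n\ge1$. Since $\phi^0$ is the identity, $\phi^{-0}(\{x_0\})=\{x_0\}$ and $h_{\phi^0}(x_0)=1$, not $1+c$. The leaf mass $c$ enters every moment except the zeroth. Consequently the recurrence $a_n-a_{n-1}=b_n$ fails at $n=1$, where in fact $a_1-a_0=c+b_1$. Your measure $\rho=2\delta_2+\alpha\delta_1$ has total mass $2+\alpha=1+c\neq1$, so it does not represent the sequence.

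With your choices the sequence at $x_0$ is $1,\ 4+\alpha,\ 8+\alpha,\dots$. Here $a_0a_2-a_1^2=(8+\alpha)-(4+\alpha)^2<0$, so the sequence is not even positive definite, and $C_\phi$ does not generate Stieltjes moment sequences. Your closing remark about an atom at $0$ gets the situation backwards. The true sequence is the moment sequence of the signed measure $\rho-c\delta_0$. What you need is a representing measure of $\{1+c+\sum_{j\le n}b_j\}_{n\ge0}$ that carries an atom at $0$ of mass at least $c$, so that $c\delta_0$ can be removed.

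Tuning parameters cannot repair this; an S-indeterminate moment sequence is unavoidable. Suppose the branch sequences are Stieltjes, so $\{b_{n+1}\}_{n\ge0}$ has an S-representing measure $\sigma$ (in your notation $\sigma=t\,d\nu$). Then the first differences $e_0=c+b_1$ and $e_n=b_{n+1}$ $(n\ge1)$ of $\{a_n\}$ are represented by $\sigma+c\delta_0$. If $\{a_n\}$ is also Stieltjes, the support criterion \eqref{support-hankel} gives an S-representing measure $P'$ of $\{a_n\}$ supported in $[1,\infty)$. Then $(t-1)\,dP'(t)$ is a second S-representing measure of $\{e_n\}$, and it has no atom at $0$. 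Hence $\{e_n\}$ must be S-indeterminate. This rules out $\nu=\delta_2$, and indeed any compactly supported $\nu$, for every $c>0$.

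This is the missing idea, and it is why the paper starts from an S-indeterminate sequence and uses two of its representing measures:
\begin{itemize}
\item The rescaled Friedrichs measure $\beta$ is supported in $(1,\infty)$. It gives positive branch weights $d_j=\int t^{j-1}(t-1)\,d\beta(t)$ whose ratio sequences are Stieltjes.
\item The rescaled Krein measure has the same moments and an atom at $0$ of mass exceeding $r=\mu(z)$. Moving mass $r$ from that atom to the point $1$ yields a positive probability measure $P_{x_0}$ with moments $1,\gamma_1+r,\gamma_2+r,\dots$.
\end{itemize}
The rest of your argument is fine: the leaf, the branch vertices, the density of $\Dinf(C_\phi)$, and the reduction to single vertices via closedness of the Stieltjes cone.
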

The composition operator $C_{\phi}$ constructed in the
proof of Theorem~\ref{noninjective} (see
Section~\ref{sec:inj}) acts in an $L^2$-space over a
discrete measure space, and its symbol $\phi$ has
finite fibers. The directed graph $\gcal$ induced by
$\phi$ is obtained from $\gcal_{1,0}$ by attaching a
single leaf $z$ to the loop vertex $x_0$ (see
Figure~\ref{fig:leaf}). The kernel of $C_{\phi}$ is
one-dimensional; more precisely, $\ker
C_\phi=\mathbb{C}\cdot\chi_{\{z\}}$. The operator
$C_{\phi}$ is unbounded and not hyponormal, whereas
removing the leaf and retaining all the remaining
point masses yields a subnormal composition operator
on $\gcal_{1,0}$. These features are discussed in
Remarks~\ref{unbco} and~\ref{gstr} following the proof
of Theorem~\ref{noninjective}. Thus the injectivity
problem posed in \cite[Problem~3.3.6]{BJJS17} is
completely settled in the negative by a counterexample
on a locally finite directed graph with only one
branching vertex.

The constructions in the proofs of Theorems~\ref{cnyz}
and~\ref{noninjective} start from the Krein and
Friedrichs measures of an arbitrary S-indeterminate
Stieltjes moment sequence whose zeroth term is one.
The relevant properties of these measures, including
their behavior under affine transformations, are
recalled in Subsection~\ref{subsec:extremal}. These
measures have identical moments but different
supports, and this difference provides the flexibility
needed in the constructions. Neither construction
requires explicit formulas for the initial
representing measures. For explicit examples and
related results, see
\cite{AlCar65,B25,BS26,BV94,Chi68,Chr03,IsMa94,Pe95};
see also \cite[(5.5.9), (5.5.10), and
Remark~5.5.4]{BJJS17}.

The proof of Theorem~\ref{cnyz} relies on the $p$-step
difference relation \eqref{circuit-diff} for the
moment sequence $\{h_{\phi^n}(x_0)\}_{n=0}^{\infty}$
at the circuit vertex $x_0$, where $p=\kappa+1$ is the
circuit length. After applying suitable affine
transformations to the initial representing measures,
we construct a Stieltjes moment sequence whose
$p$-step differences also form a Stieltjes moment
sequence. The resulting composition operator
nevertheless fails the hyponormality criterion. For a
loop, by contrast, the first-difference relation,
together with the moment assumptions in
Theorem~\ref{kezo}, implies subnormality.

In the proof of Theorem~\ref{noninjective}, the atom
of the Krein measure at zero plays a different role.
Attaching the leaf changes all moments at the loop
vertex except the moment with index zero. We realize
this change in the representing measure by
transferring part of the mass concentrated at $0$ to
the point $1$, without changing the total mass (see
\eqref{pzoo}). The Friedrichs measure provides
positive representing measures at the vertices of the
infinite branch. The two constructions thus separate
the moment property from subnormality and injectivity,
respectively.

   \subsection{Moment sequences}\label{subsec:moments} In what follows,
$\mathbb{R}$ and $\mathbb{C}$ stand for the fields of real and
complex numbers, respectively. We write $\N=\{1,2,\ldots\}$,
$\Zp=\{0,1,2,\ldots\}$, and $\Rplus=[0,\infty)$. The symbol
$\Borel(Y)$ denotes the Borel $\sigma$-algebra of a topological space
$Y$, and $\chi_E$ denotes the characteristic function of a subset $E$
of a set $X$. Unless stated otherwise, all measures are positive, and
$\supp(\nu)$ denotes the closed support of a finite Borel measure
$\nu$ on $\mathbb{R}$. The symbol $\delta_t$ is reserved for the
Borel probability measure on $\mathbb{R}$ with
$\supp(\delta_t)=\{t\}$, where $t\in\mathbb{R}$. We use the
convention $0^0=1$ when evaluating the monomial $x^0$ at $x=0$.

A sequence $\gammab=\{\gamma_n\}_{n=0}^{\infty}$ of real numbers is
called a {\em Stieltjes moment sequence} if there exists a Borel
measure $\nu$ on $\Rplus$ such that
   \begin{equation} \label{dbsc}
\gamma_n=\int_{\Rplus}t^n\D \nu(t), \quad n\in\Zp.
   \end{equation}
Such a measure is called an {\em S-representing measure} of
$\gammab$. If it is unique, $\gammab$ is said to be {\em
S-determinate}; otherwise, it is {\em S-indeterminate}. Allowing
representing measures on $\mathbb{R}$ in \eqref{dbsc} leads to the
corresponding notions of a {\em Hamburger moment sequence}, an {\em
H-representing measure}, and {\em H-determinacy}. All moments
appearing in these definitions are finite. We refer to
\cite{BCR84,Simon98,Sch17} for general background.

We shall use the following form of the support criterion in
\cite[Lemma~2.1.6]{BJJS17}. A sequence
$\gammab=\{\gamma_n\}_{n=0}^{\infty}\subseteq\Rplus$ is a Stieltjes
moment sequence admitting an S-representing measure supported in
$[1,\infty)$ if and only if
   \begin{equation}\label{support-hankel}
0\Le\sum_{i,j=0}^{n}\gamma_{i+j}\lambda_i\overline{\lambda_j}
\Le\sum_{i,j=0}^{n}\gamma_{i+j+1}\lambda_i\overline{\lambda_j}
   \end{equation}
for all $n\in\Zp$ and $\lambda_0,\ldots,\lambda_n\in\mathbb{C}$.
Equivalently, $\gammab$ is a Stieltjes moment sequence and its
first-difference sequence $\{\gamma_{n+1}-\gamma_n\}_{n=0}^{\infty}$
is a Hamburger moment sequence.

For later reference, we also state the parts of
\cite[Lemma~4.3.1]{BJJS17} concerning subsequences and differences.
If $\gammab=\{\gamma_n\}_{n=0}^{\infty}$ is a Stieltjes moment
sequence and $p\in\N$, then $\{\gamma_{jp}\}_{j=0}^{\infty}$ is a
Stieltjes moment sequence. If this subsequence is S-determinate, then
so is $\gammab$. If, in addition,
$\{\gamma_{(j+1)p}-\gamma_{jp}\}_{j=0}^{\infty}$ is a Stieltjes
moment sequence, then the unique S-representing measure of $\gammab$
is supported in $[1,\infty)$.

The {\em Carleman condition} for a Stieltjes moment sequence
$\gammab=\{\gamma_n\}_{n=0}^{\infty}$ is
   \begin{equation*}
\sum_{n=1}^{\infty}\gamma_n^{-1/(2n)}=\infty,
   \end{equation*}
where a term corresponding to $\gamma_n=0$ is interpreted as
$\infty$. This condition implies H-determinacy and hence
S-determinacy (see \cite[Proposition~2.5.1(i)]{BJJS17}). Moreover, it
follows from \cite[Lemma~4.3.1(iii)]{BJJS17} that if both $\gammab$
and $\{\gamma_{n+1}-\gamma_n\}_{n=0}^{\infty}$ are Stieltjes moment
sequences, then
   \begin{align} \label{carleman-diff}
   \begin{minipage}{67ex}
$\gammab$ satisfies the Carleman condition if and only if
$\{\gamma_{n+1}-\gamma_n\}_{n=0}^{\infty}$ satisfies the Carleman
condition.
   \end{minipage}
   \end{align}
   \subsection{Moment property for composition operators}
\label{subsec:discrete} Let $A$ be an operator in a complex Hilbert
space $\mathcal{H}$. We denote its domain by $\dz{A}$ and set
$\Dinf(A)=\bigcap_{n=0}^{\infty}\dz{A^n}$, where $A^0=I$. As in
\cite[Section~1.2]{BJJS17}, we say that $A$ {\em generates Stieltjes
moment sequences} if $\Dinf(A)$ is dense in $\mathcal{H}$ and
$\{\|A^nf\|^2\}_{n=0}^{\infty}$ is a Stieltjes moment sequence for
every $f\in\Dinf(A)$. An operator is {\em subnormal} if it is densely
defined and has a normal extension in a possibly larger Hilbert
space. It is {\em hyponormal} if it is densely defined,
$\dz{A}\subseteq\dz{A^*}$, and $\|A^*f\|\Le\|Af\|$ for every
$f\in\dz{A}$. Subnormal operators are hyponormal. For the
fundamentals of the theory of subnormal operators, we refer to
\cite{Con91}.

By a {\em discrete measure} on a nonempty set $X$ we mean a
$\sigma$-finite measure $\mu$ defined on the $\sigma$-algebra $2^X$
such that $\mu(x)>0$ for every $x\in X$, where
   \begin{align*}
\mu(x):=\mu(\{x\}), \quad x\in X.
   \end{align*}
Then $X$ is at most countable and $\mu(x)<\infty$ for every $x\in X$.
Thus, on a nonempty at most countable set $X$, discrete measures are
identified with strictly positive finite-valued functions on $X$.

Let $\mu$ be a discrete measure on $X$ and let $\phi\colon X \to X$
be a self-map. The {\em composition operator} $C_\phi$ in $L^2(\mu)$
with {\em symbol} $\phi$ is defined by
\begin{align*}
\dz{C_\phi}&=\{f\in L^2(\mu):f\circ\phi\in L^2(\mu)\},\\
C_\phi f&=f\circ\phi, \quad f\in\dz{C_\phi}.
\end{align*}
Every self-map $\phi\colon X\to X$ is nonsingular, that is,
$\mu\circ\phi^{-1}$ is absolutely continuous with respect to $\mu$.
Here
   \begin{align*}
(\mu\circ\phi^{-1})(\varDelta)=\mu(\phi^{-1}(\varDelta)), \quad
\varDelta\subseteq X.
   \end{align*}
The Radon-Nikodym derivative $\hfi=\D \, (\mu\circ\phi^{-1})/\D\mu$,
with values in $[0,\infty]$, is given~by
   \begin{align}\label{hfi}
\hfi(x)=\frac{\mu(\phi^{-1}(\{x\}))}{\mu(x)}, \quad x\in X.
   \end{align}
We call $\phi^{-1}(\{x\})$ the {\em fiber} of $\phi$ over \(x\).
Applying \eqref{hfi} to the $n$th iterate $\phi^n$, with $\phi^0$ the
identity map on $X$, gives
\begin{align}\label{hfin}
h_{\phi^n}(x)=\frac{\mu(\phi^{-n}(\{x\}))}{\mu(x)},
\quad x\in X,\ n\in\Zp.
\end{align}
As usual, $\phi^{-n}(\varDelta)$ means $(\phi^n)^{-1}(\varDelta)$
whenever $\varDelta \subseteq X$ and $n \in \Zp$. As in
\cite[(3.1.4)]{BJJS17}, we have
   \begin{equation}\label{pointwise-norm}
\text{$\|C_\phi^n\chi_{\{x\}}\|^2=\mu(x)h_{\phi^n}(x)$ whenever $n\in
\Zp$ and $\chi_{\{x\}}\in\dz{C_\phi^n}$.}
   \end{equation}
Observe that, by \eqref{hfi},
   \begin{align} \label{suxj}
   \begin{minipage}{65ex}
$C_\phi$ is injective $\iff$ $\phi(X)=X$ $\iff$ $h_\phi(x)>0$ for
every $x\in X$.
   \end{minipage}
   \end{align}

   Finally, as shown below, a composition operators $C_{\phi}$ over
discrete measure space, whose symbol $\phi$ has finite fibers,
generates Stieltjes moment sequences if and only if the iterated
Radon-Nikodym derivatives $\{h_{\phi^n}(x)\}_{n=0}^{\infty}$ form a
Stieltjes moment sequence for every $x\in X$.
   \begin{lem} \label{wkws}
Let $\mu$ be a discrete measure on $X$ and let $\phi\colon X \to X$
be a self-map such that $\phi^{-1}(\{x\})$ is finite for every $x\in
X$. Then
   \begin{enumerate}
   \item[(i)] $\overline{\Dinf(C_\phi)}=L^2(\mu)$,
   \item[(ii)] $C_{\phi}$ generates Stieltjes moment sequences if and
only if $\{h_{\phi^n}(x)\}_{n=0}^{\infty}$ is a Stieltjes moment
sequence for every $x\in X$.
   \end{enumerate}
   \end{lem}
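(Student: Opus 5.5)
Both parts rest on one observation: finite fibers make all iterated fibers finite. Since every fiber $\phi^{-1}(\{x\})$ is finite, induction on $n$ shows that $\phi^{-n}(\{x\})$ is finite for every $n\in\Zp$ and $x\in X$. Indeed, $\phi^{-(n+1)}(\{x\})=\bigcup_{y\in\phi^{-1}(\{x\})}\phi^{-n}(\{y\})$ is a finite union of finite sets. Hence $\mu(\phi^{-n}(\{x\}))<\infty$, and by \eqref{hfin} we get $h_{\phi^n}(x)<\infty$.

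For (i), take any $f$ with finite support $F$. Then $f\circ\phi^n$ is supported in the finite set $\phi^{-n}(F)$, so $f\circ\phi^n\in L^2(\mu)$. An easy induction shows that $f\in\dz{C_\phi^n}$ with $C_\phi^nf=f\circ\phi^n$; at each step the iterate is again finitely supported. Thus every finitely supported function lies in $\Dinf(C_\phi)$. Since $X$ is at most countable and $\mu$ is discrete, such functions are dense in $L^2(\mu)$, which proves (i).

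For (ii), the forward direction is immediate. Each $\chi_{\{x\}}$ belongs to $\Dinf(C_\phi)$, and by \eqref{pointwise-norm} we have $\|C_\phi^n\chi_{\{x\}}\|^2=\mu(x)h_{\phi^n}(x)$. Dividing by $\mu(x)>0$, which preserves the Stieltjes moment property, gives the claim.

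For the converse, fix $f\in\Dinf(C_\phi)$ and compute
\begin{align*}
\|C_\phi^nf\|^2=\sum_{y\in X}|f(\phi^n(y))|^2\mu(y)=\sum_{x\in X}|f(x)|^2\mu(\phi^{-n}(\{x\}))=\sum_{x\in X}|f(x)|^2\mu(x)h_{\phi^n}(x).
\end{align*}
Regrouping the sum over the fibers of $\phi^n$ is legitimate because all terms are nonnegative. For each $x$, let $\nu_x$ be an S-representing measure of $\{h_{\phi^n}(x)\}_{n=0}^{\infty}$, and set $\nu=\sum_x|f(x)|^2\mu(x)\nu_x$. This is a Borel measure on $\Rplus$, and by monotone convergence $\int t^n\D\nu=\|C_\phi^nf\|^2<\infty$ for every $n$. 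So $\nu$ is an S-representing measure of $\{\|C_\phi^nf\|^2\}_{n=0}^{\infty}$. Together with (i), this shows that $C_\phi$ generates Stieltjes moment sequences.

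The one point needing care is the identification $C_\phi^nf=f\circ\phi^n$ for $f\in\Dinf(C_\phi)$. It follows by induction from the definition of $C_\phi$, since $f\in\dz{C_\phi^n}$ means each successive composition lies in $L^2(\mu)$. The finiteness of $\|C_\phi^nf\|^2$ is then exactly what keeps all moments of $\nu$ finite, so no summability issue remains.
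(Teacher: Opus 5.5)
Your proof is correct and follows essentially the same route as the paper: density comes from the $C_\phi$-invariant space of finitely supported functions, the forward direction uses $\chi_{\{x\}}$ together with \eqref{pointwise-norm}, and the converse uses the mixture measure $\sum_x |f(x)|^2\mu(x)\nu_x$, whose moments are computed by regrouping over the fibers of $\phi^n$ (Tonelli/monotone convergence). The only cosmetic difference is that the paper notes separately that each representing measure is a probability measure, so that $\nu_f(\Rplus)=\|f\|^2$; your computation already covers this as the case $n=0$.
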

   \begin{proof}
(i) If $\phi^{-1}(\{x\})$ is finite for every $x\in X$, then the
iterated fiber $\phi^{-n}(\{x\})$ is finite for every $x\in X$ and
every $n\in \N$. The space of all finitely supported complex
functions on $X$ is invariant under $C_\phi$. Since it is dense in
$L^2(\mu)$, we conclude that $\overline{\Dinf(C_\phi)}=L^2(\mu)$.
This also follows from \cite[(3.1.5)]{BJJS17} and
\cite[Theorem~47]{BJJS18}.

   (ii) By assumption on fibers of $\phi$ and \eqref{hfin},
$h_{\phi^n}(x)<\infty$ for all $x\in$ and $n\in \N$. If $C_{\phi}$
generates Stieltjes moment sequences, then, in view of the previous
argument and \eqref{pointwise-norm}, the sequence
$\{h_{\phi^n}(x)\}_{n=0}^{\infty}$ is a Stieltjes moment sequence for
every $x\in X$.

Assume now that for every $x\in X$, the sequence
$\{h_{\phi^n}(x)\}_{n=0}^{\infty}$ is a Stieltjes moment sequence
with a representing measure $P_x$. Take \(f\in\Dinf(C_\phi)\) and
define a finite positive Borel measure \(\nu_f\) on \(\mathbb{R}_+\)
by
   \begin{align*}
\nu_f(\varDelta) = \sum_{x\in X}\mu(x)|f(x)|^2 P_x(\varDelta), \quad
\varDelta\in\Borel(\mathbb{R}_+).
   \end{align*}
Since every \(P_x\) is a probability measure,
   \begin{align*}
\nu_f(\mathbb{R}_+) = \sum_{x\in X}\mu(x)|f(x)|^2 = \|f\|^2<\infty.
   \end{align*}
Using Tonelli's theorem, we obtain \allowdisplaybreaks
   \allowdisplaybreaks
   \begin{align*}
\int_{\Rplus}t^n\D \nu_f(t) &= \sum_{x\in X} \mu(x)|f(x)|^2
\int_{\Rplus}t^n \D P_x(t)
   \\
&= \sum_{x\in X} \mu(x)|f(x)|^2h_{\phi^n}(x)
   \\
&\hspace{-1ex} \overset{\eqref{hfin}}{=} \sum_{x\in X} |f(x)|^2
\mu\bigl(\phi^{-n}(\{x\})\bigr)
   \\
&= \sum_{x\in X} \sum_{y\in X} |f(x)|^2 \chi_{\phi^{-n}(\{x\})}(y)
\mu(y)
   \\
&= \sum_{y\in X} \mu(y) \sum_{x\in X} |f(x)|^2 \bigl(\chi_{\{x\}}
\circ \phi^{n}\bigr)(y)
   \\
&= \sum_{y\in X} \mu(y)|f(\phi^n(y))|^2 = \|C_\phi^n f\|^2, \qquad n
\in \mathbb{Z}_+.
   \end{align*}
Together with (i), this shows that \(C_\phi\) generates Stieltjes
moment sequences.
   \end{proof}
   \subsection{One-circuit directed graphs}
\label{subsec:graphs} For $\eta\in\N\cup\{\infty\}$, put
   \begin{align*}
J_\eta=\{i\in\N:i\Le\eta\}.
   \end{align*}
Following \cite[(3.2.2) and (3.2.3)]{BJJS17}, for $\kappa\in\Zp$ let
   \begin{equation}\label{graph-set}
X_{\eta,\kappa}=\{x_0,\ldots,x_\kappa\}
\cup\{x_{i,j}:i\in J_\eta,\ j\in\N\},
   \end{equation}
where all the displayed points are distinct, and define
   \begin{equation}\label{graph-map}
\phi_{\eta,\kappa}(x)=
\begin{cases}
x_{i,j-1}&\text{if }x=x_{i,j},\ i\in J_\eta,\ j\Ge2,\\
x_\kappa&\text{if }x=x_0\text{ or }x=x_{i,1},\ i\in J_\eta,\\
x_{r-1}&\text{if }x=x_r,\ r \in J_{\kappa}.
\end{cases}
   \end{equation}
Given a set $X$, the directed graph {\em induced} by a self-map
$\phi\colon X\to X$ is understood to be the directed graph
$(X,E^\phi)$, where $X$ is regarded as the set of vertices and
$E^\phi:=\{(x,y)\in X\times X\colon x=\phi(y)\}$ as the set of edges.
Thus its arrows run in the direction opposite to the action of
$\phi$. We write $\gcal_{\eta,\kappa}$ for the directed graph induced
by $\phi_{\eta,\kappa}$ (see \eqref{graph-map}. It has $\eta$
infinite branches issuing from $x_\kappa$ and a circuit of {\em
length}
   \begin{align*}
p:=\kappa+1.
   \end{align*}
The circuit is a loop when $\kappa=0$. Every vertex has a nonempty
preimage, so all composition operators with this symbol are injective
(see \eqref{suxj}). If $\eta<\infty$, then every fiber is finite, so
Lemma~\ref{wkws} applies to every discrete measure on
$X_{\eta,\kappa}$.

Whenever we consider $C_\phi$ on $\gcal_{\eta,\kappa}$, we mean that
$X=X_{\eta,\kappa}$, $\phi=\phi_{\eta,\kappa}$, and $\mu$ is a
discrete measure on $X$. This is exactly the setting of
\cite[(3.4.1)]{BJJS17}. The identities for the iterated Radon-Nikodym
derivatives needed in this paper are
   \begin{align} \label{cycle-id}
h_{\phi^{n+r}}(x_0) &=\frac{\mu(x_r)}{\mu(x_0)}h_{\phi^n}(x_r),
&&n\in\Zp,\ 0\Le r\Le\kappa,
   \\  \label{circuit-diff}
h_{\phi^{n+p}}(x_0) &=h_{\phi^n}(x_0) + \sum_{i\in
J_\eta}\frac{\mu(x_{i,1})}{\mu(x_0)} h_{\phi^n}(x_{i,1}), &&n\in\Zp,
   \\ \label{branch-id}
h_{\phi^n}(x_{i,j}) &=\frac{\mu(x_{i,j+n})}{\mu(x_{i,j})},
&&n\in\Zp,\ i\in J_\eta,\ j\in\N.
   \end{align}
Formula \eqref{cycle-id} is \cite[(3.4.8)]{BJJS17}, while
\eqref{circuit-diff} is \cite[(3.4.9)]{BJJS17}. In turn, identity
\eqref{branch-id} follows directly from \eqref{hfin}.
   \subsection{Hyponormality and the consistency condition}
   Using \eqref{suxj}, we may restate the hyponormality criterion for
composition operators given in \cite{BJJS17} as follows.
   \begin{prop}[{\cite[Proposition~3.1.2]{BJJS17}}] \label{wkwh}
Let $\mu$ be a discrete measure on a set $X$ and let $\phi\colon X
\to X$ be a self-map. Assume that $C_{\phi}$ is densely defined. Then
$C_{\phi}$ is hyponormal if and only if $\phi(X)=X$ and
   \begin{equation} \label{hypcriterion}
\frac{1}{\mu(x)}\sum_{y\in\phi^{-1}(\{x\})}
\frac{\mu(y)^2}{\mu(\phi^{-1}(\{y\}))}\Le1, \quad x\in X.
   \end{equation}
Under these assumptions the denominators are positive and finite.
   \end{prop}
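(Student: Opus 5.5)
The plan is to reduce everything to an explicit computation of $\|C_\phi f\|^2$ and $\|C_\phi^* f\|^2$ fiber by fiber, and then apply the weighted Cauchy--Schwarz inequality on each fiber. First I record the standing facts. Since $C_\phi$ is densely defined, $h_\phi<\infty$ $\mu$-a.e. (a standard fact, cf.\ \cite{BJJS17}); for a discrete measure this means $\mu(\phi^{-1}(\{y\}))<\infty$ for every $y\in X$. Summing over $y=\phi(z)$ gives
\begin{align*}
\|C_\phi f\|^2=\sum_{y\in X}|f(y)|^2\mu(\phi^{-1}(\{y\})), \quad f\in\dz{C_\phi}.
\end{align*}
The adjoint acts by the formula
\begin{align*}
(C_\phi^*g)(x)=\frac{1}{\mu(x)}\sum_{y\in\phi^{-1}(\{x\})}g(y)\mu(y).
\end{align*}
Its domain is the set of those $g\in L^2(\mu)$ for which this series converges absolutely at every $x$ and defines an element of $L^2(\mu)$. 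This follows by testing against the functions $\chi_{\{x\}}\in\dz{C_\phi}$; alternatively, I would cite the description of $C_\phi^*$ in \cite{BJJS17}.

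Next, I would show that hyponormality forces $\phi(X)=X$. If $x\notin\phi(X)$, then $\chi_{\{x\}}\in\ker C_\phi$, while the adjoint formula gives $(C_\phi^*\chi_{\{x\}})(\phi(x))=\mu(x)/\mu(\phi(x))\neq0$. Hence $\|C_\phi^*\chi_{\{x\}}\|>0=\|C_\phi\chi_{\{x\}}\|$, so $C_\phi$ is not hyponormal. Conversely, once $\phi(X)=X$, every $\mu(\phi^{-1}(\{y\}))$ is positive, and it is finite by the first paragraph. This settles the remark about the denominators, and \eqref{suxj} identifies this condition with injectivity.

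Now assume $\phi(X)=X$ and set $c(x)$ to be the left-hand side of \eqref{hypcriterion}. For sufficiency, fix $f\in\dz{C_\phi}$ and $x\in X$. Cauchy--Schwarz with weights $\mu(\phi^{-1}(\{y\}))$ gives
\begin{align*}
\Bigl|\sum_{y\in\phi^{-1}(\{x\})}f(y)\mu(y)\Bigr|^2\Le
\Bigl(\sum_{y\in\phi^{-1}(\{x\})}\frac{\mu(y)^2}{\mu(\phi^{-1}(\{y\}))}\Bigr)
\Bigl(\sum_{y\in\phi^{-1}(\{x\})}|f(y)|^2\mu(\phi^{-1}(\{y\}))\Bigr).
\end{align*}
The same argument with $|f|$ in place of $f$ gives absolute convergence of the series defining $C_\phi^*f$. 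Dividing by $\mu(x)$ and summing over $x$, using that the fibers partition $X$, yields $\|C_\phi^*f\|^2\Le\sup_x c(x)\,\|C_\phi f\|^2\Le\|C_\phi f\|^2$. In particular $f\in\dz{C_\phi^*}$, so $C_\phi$ is hyponormal.

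For necessity, which I expect to be the only delicate step, fix $x$ and a finite set $F\subseteq\phi^{-1}(\{x\})$. I would test hyponormality on the equality case of Cauchy--Schwarz,
\begin{align*}
f=\sum_{y\in F}\frac{\mu(y)}{\mu(\phi^{-1}(\{y\}))}\chi_{\{y\}}.
\end{align*}
This $f$ is finitely supported, and $\|C_\phi\chi_{\{y\}}\|^2=\mu(\phi^{-1}(\{y\}))<\infty$ for each $y$, so $f\in\dz{C_\phi}$. Write $s_F=\sum_{y\in F}\mu(y)^2/\mu(\phi^{-1}(\{y\}))$. Then $\|C_\phi f\|^2=s_F$, while $\|C_\phi^*f\|^2\Ge s_F^2/\mu(x)$ by keeping only the $x$-th term. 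Hyponormality therefore gives $s_F\Le\mu(x)$. Letting $F$ increase to the whole (possibly infinite) fiber gives \eqref{hypcriterion} by monotone convergence. The main care needed is in the domain bookkeeping: that finitely supported functions lie in $\dz{C_\phi}$, and that the adjoint formula is valid on them.
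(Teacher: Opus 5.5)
Your proof is correct, and it takes a different route from the paper. The paper gives no argument of its own: it imports the criterion from \cite[Proposition~3.1.2]{BJJS17} and only uses \eqref{suxj} to rewrite the positivity of $\hfi$ as $\phi(X)=X$. In \cite{BJJS17} the criterion arises by specializing a general hyponormality criterion for densely defined composition operators on arbitrary $\sigma$-finite spaces.

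You instead work entirely in the discrete setting:
\begin{enumerate}
\item[(a)] explicit formulas for $C_\phi$ and $C_\phi^*$;
\item[(b)] the kernel vector $\chi_{\{x\}}$ with $C_\phi^*\chi_{\{x\}}\neq0$, which forces surjectivity;
\item[(c)] fiberwise weighted Cauchy--Schwarz for sufficiency;
\item[(d)] the Cauchy--Schwarz extremizers as test functions for necessity.
\end{enumerate}
This is self-contained and shows where each hypothesis enters. Dense definition gives finiteness of $\mu(\phi^{-1}(\{y\}))$, so finitely supported functions lie in $\dz{C_\phi}$; surjectivity gives positivity of the denominators. As a byproduct, the optimal constant $K$ in $\|C_\phi^*f\|^2\Le K\|C_\phi f\|^2$ is $\sup_x c(x)$. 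The cited route buys generality (non-atomic measures) at the cost of the general description of $C_\phi^*$ and the conditional-expectation machinery.

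One point to tighten. Testing against $\chi_{\{x\}}$ only shows that every $g\in\dz{C_\phi^*}$ satisfies your formula. Sufficiency needs the converse: if the formula produces an element of $L^2(\mu)$, then $g\in\dz{C_\phi^*}$. This follows by regrouping an absolutely convergent series (absolute convergence holds because $f\circ\phi,g\in L^2(\mu)$):
\begin{align*}
\langle C_\phi f,g\rangle=\sum_{z\in X}f(\phi(z))\overline{g(z)}\mu(z)
=\sum_{x\in X}f(x)\overline{\sum_{y\in\phi^{-1}(\{x\})}g(y)\mu(y)},
\quad f\in\dz{C_\phi}.
\end{align*}
Alternatively, note that finitely supported functions form a core for $C_\phi$.
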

Suppose that $\mu$ is a discrete measure on a set $X$, and
$\phi\colon X \to X$ is a self-map. We say that a family
$\boldsymbol{P}=\{P_x\}_{x\in X}$ of Borel probability measures on
$\Rplus$ satisfies the {\em consistency condition} if
   \begin{equation}\label{consistency}
\frac{1}{\mu(x)}\sum_{y\in\phi^{-1}(\{x\})}\mu(y)P_y(\varDelta)
=\int_\varDelta t \D P_x(t), \quad x\in\phi(X),\
\varDelta\in\Borel(\Rplus).
   \end{equation}
This is condition $\CC$ of \cite[p.\ 505]{BJJS17}. The sufficient
condition for subnormality in \cite[Theorem~3.1.3]{BJJS17},
originating in \cite{BJJS15}, states that if $C_\phi$ is densely
defined, $\phi(X)=X$, and there exist a family $\boldsymbol{P}$ of
Borel probability measures on $\Rplus$ satisfying
\eqref{consistency}, then $C_\phi$ is subnormal. Moreover,
   \begin{align*}
h_{\phi^n}(x)=\int_{\Rplus} t^n \D P_x(t), \quad x\in X, \, n\in\Zp.
   \end{align*}
The surjectivity assumption is a part of this criterion.

For one-branch directed graph $\gcal_{1,\kappa}$, the criterion has a
particularly useful form.
   \begin{prop}[{\cite[Proposition~4.3.4]{BJJS17}}] \label{dirze}
Let $\kappa \in \Zp$, let $\mu$ be a discrete measure on $X_{1,
\kappa}$, and let $\phi=\phi_{1,\kappa}$. Then the following
conditions are equivalent{\em :}
   \begin{enumerate}
   \item[(i)] there exists a family $\{P_x\}_{x\in X}$ of Borel probability measures on $\Rplus$ satisfying
\eqref{consistency},
   \item[(ii)] $\{h_{\phi^n}(x_0)\}_{n=0}^{\infty}$ is a Stieltjes moment sequence which has an
S-representing measure $\rho$ supported in $[1,\infty)$,
   \item[(iii)] the sequence $\gamma_n=h_{\phi^n}(x_0)$, $n\in \Zp$, satisfies
\eqref{support-hankel}.
   \end{enumerate}
Moreover, each of these conditions implies that $C_\phi$ is
subnormal.
   \end{prop}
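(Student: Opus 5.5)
The equivalence (ii)$\iff$(iii) is just the support criterion \eqref{support-hankel} applied to $\gamma_n=h_{\phi^n}(x_0)$. So the plan is to prove (i)$\Rightarrow$(ii) and (ii)$\Rightarrow$(i) by direct manipulation of \eqref{consistency} along the graph $\gcal_{1,\kappa}$. Throughout, the relevant fibers are
\[
\phi^{-1}(\{x_{r-1}\})=\{x_r\}\ (1\Le r\Le\kappa),\qquad
\phi^{-1}(\{x_\kappa\})=\{x_0,x_{1,1}\},\qquad
\phi^{-1}(\{x_{1,j}\})=\{x_{1,j+1}\}.
\]
The final subnormality assertion will follow from \cite[Theorem~3.1.3]{BJJS17}. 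Its hypotheses hold because $\phi(X)=X$ and $C_\phi$ is densely defined, the latter by Lemma~\ref{wkws}(i), since all fibers are finite.

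\emph{(i)$\Rightarrow$(ii).} By the remark after \eqref{consistency}, $P_{x_0}$ represents $\{h_{\phi^n}(x_0)\}$, so it remains to show $P_{x_0}([0,1))=0$.
\begin{itemize}
\item Iterating \eqref{consistency} around the circuit gives $\mu(x_r)\D P_{x_r}=\mu(x_0)t^r\D P_{x_0}$ for $0\Le r\Le\kappa$.
\item Inserting this into \eqref{consistency} at $x_\kappa$ yields
\[
\mu(x_0)\,t^{p}\D P_{x_0}=\mu(x_0)\D P_{x_0}+\mu(x_{1,1})\D P_{x_{1,1}}.
\]
\item Hence $(t^p-1)\D P_{x_0}(t)$ is a positive measure. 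On $[0,1)$ the density $t^p-1$ is strictly negative, which forces $P_{x_0}([0,1))=0$.
\end{itemize}
For $\kappa=0$ the same computation works with $x_\kappa=x_0$.

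\emph{(ii)$\Rightarrow$(i).} Given $\rho$ supported in $[1,\infty)$ representing $\{h_{\phi^n}(x_0)\}$, the identities above dictate the definitions:
\begin{align*}
P_{x_r}(\D t)&=\frac{\mu(x_0)}{\mu(x_r)}\,t^r\rho(\D t), \quad 0\Le r\Le\kappa,\\
P_{x_{1,j+1}}(\D t)&=\frac{\mu(x_0)}{\mu(x_{1,j+1})}\,t^j(t^p-1)\rho(\D t), \quad j\in\Zp.
\end{align*}
These are positive measures because $\supp(\rho)\subseteq[1,\infty)$. It remains to check two things.
\begin{itemize}
\item \emph{Normalization.} For $P_{x_r}$ this is \eqref{cycle-id} with $n=0$. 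For the branch vertices,
\[
\mu(x_0)\int t^j(t^p-1)\D\rho=\mu(x_0)\bigl(h_{\phi^{j+p}}(x_0)-h_{\phi^j}(x_0)\bigr)=\mu(x_{1,1})h_{\phi^j}(x_{1,1})=\mu(x_{1,j+1}),
\]
using \eqref{circuit-diff} and then \eqref{branch-id}.
\item \emph{Consistency.} At $x_{r-1}$ with $r\Le\kappa$, and at $x_{1,j}$, condition \eqref{consistency} reduces to the identity $t\cdot t^{m-1}=t^{m}$. At $x_\kappa$ it reads
\[
\mu(x_0)\rho+\mu(x_0)(t^p-1)\rho=\mu(x_0)t^p\rho=\mu(x_\kappa)\,t\,P_{x_\kappa},
\]
which holds by the definition of $P_{x_\kappa}$.
\end{itemize}

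The only genuinely nontrivial point is the normalization of the branch measures. Since these measures are forced by the consistency condition, their total masses must agree with the given $\mu(x_{1,j})$, and this agreement is exactly what \eqref{circuit-diff} combined with \eqref{branch-id} guarantees. Every other step is routine bookkeeping.
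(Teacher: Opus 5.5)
Your proposal is correct and takes the same approach as the paper. For (ii)$\Rightarrow$(i)$\Rightarrow$subnormality you use exactly the construction sketched after the statement: the measures $t^r\,\D\rho$ on the circuit and $t^{j-1}(t^p-1)\,\D\rho$ on the branch, positivity from $\supp(\rho)\subseteq[1,\infty)$, normalization via \eqref{cycle-id}--\eqref{branch-id}, and then \cite[Theorem~3.1.3]{BJJS17}; (ii)$\iff$(iii) is the support criterion, and your direct argument for (i)$\Rightarrow$(ii) (which the paper leaves to \cite{BJJS17}) is sound, since going around the circuit gives $\mu(x_0)(t^p-1)\,\D P_{x_0}=\mu(x_{1,1})\,\D P_{x_{1,1}}\Ge 0$ and hence $P_{x_0}([0,1))=0$.
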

   No determinacy hypothesis is required here. To make the
construction behind the implication ``\text{(ii)$\implies$$C_\phi$ is
subnormal}'' explicit, put
   \begin{align*}
P_{x_r}(\varDelta) & =\frac{\mu(x_0)}{\mu(x_r)} \int_{\varDelta} t^r
\D \rho(t),\quad \varDelta \in\Borel(\mathbb{R}_+), \, 0\Le
r\Le\kappa,
   \\
P_{x_{1,j}}(\varDelta)&=\frac{\mu(x_0)}{\mu(x_{1,j})} \int_\varDelta
t^{j-1}(t^p-1) \D \rho(t),\quad \varDelta \in\Borel(\mathbb{R}_+), \,
j\in\N,
   \end{align*}
for $\sigma\in\Borel(\Rplus)$, where $p=\kappa+1$.
Positivity follows from $\supp(\rho)\subseteq[1,\infty)$, and
\eqref{cycle-id}--\eqref{circuit-diff} show that these measures
have mass one. Substitution in \eqref{consistency} verifies that
they satisfy the consistency condition. This is the construction
in \cite[Theorem~4.1.1 and proof of Proposition~4.3.4]{BJJS17}.

We also recall the realization procedure from
\cite[Example~4.3.5]{BJJS17}. For $\eta=1$, any sequence of positive
numbers $\gamma_n$ satisfying $\gamma_0=1$ and
$\gamma_{n+p}-\gamma_n>0$ for all $n\in\Zp$ can be realized as
$h_{\phi^n}(x_0)$ by setting
   \begin{equation}\label{realization}
   \left.
   \begin{gathered} \mu(x_0)=1,\qquad
\mu(x_r)=\gamma_r\quad(1\Le r\Le\kappa),
   \\
\mu(x_{1,n+1})=\gamma_{n+p}-\gamma_n\quad(n\in\Zp).
   \end{gathered}
   \;\; \right\}
   \end{equation}
The equality $h_{\phi^n}(x_0)=\gamma_n$ for all $n\in \Zp$ follows
from \eqref{cycle-id} and \eqref{circuit-diff} by induction in each
residue class modulo $p$. This procedure alone does not assert
subnormality or the Stieltjes moment property at the other vertices.
   \subsection{Affine transformations of Krein and Friedrichs measures}
\label{subsec:extremal}
   The existence of S-indeterminate Stieltjes moment sequences goes
back to Stieltjes \cite{Stieltjes}. Explicit examples relevant to the
present setting are also discussed in \cite{Simon98} (see also
\cite[Section~2]{BJJS17}). An H-representing measure $\nu$ of an
H-indeterminate Hamburger moment sequence
$\gammab=\{\gamma_n\}_{n=0}^{\infty}$ is called {\em N-extremal} if
the polynomials are dense in $L^2(\nu)$. The support of every
N-extremal measure is countably infinite and has no accumulation
point in $\mathbb{R}$ (see \cite[Theorems~5 and~4.11]{Simon98}).
Every S-indeterminate Stieltjes moment sequence has two distinguished
N-extremal S-representing measures, its {\em Krein measure}
$\widetilde\alpha$ and its {\em Friedrichs measure}
$\widetilde\beta$. The property used in our constructions is
   \begin{equation}\label{krein-gap}
0=\inf\supp(\widetilde\alpha)
<\inf\supp(\widetilde\beta).
   \end{equation}
See \cite{Simon98}; see also \cite[Subsection~2.1]{BJJS17}. In
particular, zero is an isolated atom of $\widetilde\alpha$. If the
zeroth moment is one, both measures are probability measures and
   \begin{align*}
0<\widetilde\alpha(\{0\})<1.
   \end{align*}

For $\vartheta>0$ and $a\in\mathbb{R}$, set
   \begin{equation}\label{affine-map}
\psi_{\vartheta,a}(t)=\vartheta(t+a),\quad t\in\mathbb{R},
   \end{equation}
as in \cite[(2.2.1)]{BJJS17}. The map $\psi_{\vartheta,a}$ is a
polynomial automorphism and $\psi_{\vartheta,a}^{-1} =
\psi_{\frac{1}{\vartheta},-a \vartheta}$. If $\nu$ is a Borel measure
on $\mathbb{R}$ having moments of all orders, the transported measure
$\nu\circ\psi_{\vartheta,a}^{-1}$ satisfies
   \begin{align*}
\int_{\mathbb{R}}t^n \D \, (\nu\circ\psi_{\vartheta,a}^{-1})(t)
=\vartheta^n\sum_{j=0}^{n}\binom{n}{j}a^{n-j} \int_{\mathbb{R}}t^j \D
\nu(t),\quad n\in\Zp.
   \end{align*}
Consequently, the transformation $\nu \longmapsto
\nu\circ\psi_{\vartheta,a}^{-1}$ sends measures with identical
moments to measures with identical moments. It preserves
N-extremality, and
   \begin{equation}\label{affine-support}
\supp(\nu\circ\psi_{\vartheta,a}^{-1})
=\psi_{\vartheta,a}(\supp(\nu)).
   \end{equation}
These are the assertions of \cite[Lemma~2.2.2]{BJJS17} needed in the
subsequent parts of this paper. The equality of supports follows from
the fact that $\psi_{\vartheta,a}$ is a homeomorphism. For
N-extremality, a change of variable gives a unitary map between the
two $L^2$-spaces
   \begin{align*}
L^2(\nu\circ\psi_{\vartheta,a}^{-1}) \ni f \longmapsto f\circ
\psi_{\vartheta,a} \in L^2(\nu),
   \end{align*}
which maps polynomials onto polynomials. If $a\Ge0$, a
measure supported in $\Rplus$ remains supported
in~$\Rplus$. We refer the reader to
\cite[Subsection~2.2]{BJJS17} for more complete
information on transforming moments via homotheties.
   \subsection{AI Disclosure} All ideas and contributions in this
manuscript originated exclusively with the authors.
They used OpenAI's GPT-5.6 tools to assist in
reviewing proof strategies and in literature searches.
   \section{The ``moment property implies subnormality''
problem for $C_{\phi_{1,\kappa}}$}
   \subsection{Negative solution for $\gcal_{1,\kappa}$
with $\kappa\Ge 1$} \label{sec:long}
   \begin{proof}[Proof of Theorem~\ref{cnyz}]
Fix \(\kappa\in\mathbb{N}\) and put $p=\kappa+1$. We divide the proof
into several steps.

\medskip

{\sc Step 1}. {\em Construction of N-extremal measures $\alpha_s$ and
$\beta_s$}.

Choose any S-indeterminate Stieltjes moment sequence, normalized so
that its zeroth term is \(1\). Let \(\widetilde{\alpha}\) and
\(\widetilde{\beta}\) be the Krein and Friedrichs measures of this
sequence, respectively. Clearly, $\widetilde{\alpha}$ and
$\widetilde{\beta}$ are probability measures. By \eqref{krein-gap},
   \begin{align*}
0=\inf\supp(\widetilde{\alpha}) < \inf\supp(\widetilde{\beta}).
   \end{align*}
Set
   \begin{align*}
b:=\inf\supp(\widetilde{\beta})>0.
   \end{align*}
Since the closed support of every N-extremal measure is countably
infinite and has no accumulation point in \(\mathbb{R}\) (see
\cite[Lemma~2.1.1]{BJJS17}), we deduce that \(0\) is an isolated
point of \(\supp(\widetilde{\alpha})\). Consequently,
   \begin{align} \label{wxlm}
m:=\widetilde{\alpha}(\{0\}) \in (0,1) \;\; \text{and} \;\; d:=
\inf\bigl(\supp(\widetilde{\alpha})\setminus\{0\}\bigr) > 0.
   \end{align}
In particular,
\[
\supp(\widetilde{\alpha})\setminus\{0\}
\subseteq[d,\infty).
\]

Fix \(q\in(0,1)\). For \(s>0\), define
\[
\psi_s(t)=q+st = \psi_{s,q/s}(t), \quad t\in\mathbb{R},
\]
where the notation \(\psi_{s,q/s}\) is as in \eqref{affine-map}.
Put
\[
\alpha_s = \widetilde{\alpha}\circ\psi_s^{-1} \;\; \text{and} \;\;
\beta_s = \widetilde{\beta}\circ\psi_s^{-1}.
\]
It follows from \eqref{affine-support} that
   \begin{align} \label{cvzq}
\supp{\alpha_s} & = \psi_s(\supp(\widetilde{\alpha})) \subseteq
[q,\infty),
   \\  \label{smbs}
\supp{\beta_s} & = \psi_s(\supp(\widetilde{\beta})) \subseteq \,
[q+sb,\infty).
   \end{align}
Combining \eqref{wxlm} and \eqref{cvzq}, we deduce that
$\alpha_{s,+}:=\alpha_s-m\delta_q$ is a nonzero positive Borel
measure on $\mathbb{R}_+$ with moments of all orders such that
   \begin{align*}
\supp(\alpha_{s,+}) = \psi_s(\supp(\widetilde{\alpha}) \setminus
\{0\}) \subseteq[q+sd,\infty).
   \end{align*}
By Subsection~\ref{subsec:extremal}, \(\alpha_s\) and \(\beta_s\) are
distinct N-extremal measures representing the same Stieltjes moment
sequence, that is,
   \begin{equation} \label{rywm}
\int_{\mathbb{R}_+} t^n\,\D\alpha_s(t) = \int_{\mathbb{R}_+} t^n \D
\beta_s(t), \quad n\in\mathbb{Z}_+.
   \end{equation}

\medskip

{\sc Step 2}. {\em Construction of Stieltjes moment sequences
$\{\gamma_n^{(s)}\}_{n=0}^{\infty}$ and
$\{D_n^{(s)}\}_{n=0}^{\infty}$}.

Fix $s_0 > 0$ such that $q+s_0 b>1$. For
\(r\in\{0,\ldots,\kappa\}\),~set
\[
I_r(s) = \int_{\mathbb{R}_+} \frac{t^r}{t^p-1} \D \beta_s(t), \quad
s\Ge s_0.
\]
Since, by \eqref{smbs}, $\inf\supp(\beta_s)=q+sb$ and $r\Le p-1$,
these integrals are finite and
   \begin{align*}
0\Le I_r(s) \Le \sup_{t\Ge q+sb} \frac{t^r}{t^p-1} \longrightarrow 0
\quad \text{as } s \rightarrow \infty.
   \end{align*}
This implies that
   \begin{equation}\label{eqlt}
\lim_{s\to\infty} I_r(s) = 0, \quad r=0,\ldots,\kappa.
   \end{equation}

Put
   \begin{align*}
A:=\frac{m}{1-q^p} > 0 \quad\text{and}\quad R_s:=I_0(s) + A > 0,
\quad s\Ge s_0.
   \end{align*}
For $s\Ge s_0$, define the Borel probability measure \(\rho_s\) on
\(\mathbb{R}_+\) by
   \begin{equation}\label{eqrs}
\rho_s(\varDelta) = \frac{1}{R_s} \left(
\int_\varDelta\frac{1}{t^p-1} \D \beta_s(t) + A\delta_q(\varDelta)
\right), \quad \varDelta\in\mathfrak{B}(\mathbb{R}_+).
   \end{equation}
The measure \(\rho_s\) has moments of all orders. For $s\Ge s_0$, set
   \begin{align} \label{gmsr}
\gamma_n^{(s)} = \int_{\mathbb{R}_+}t^n \D \rho_s(t), \quad
n\in\mathbb{Z}_+.
   \end{align}
Clearly, $\gamma_0^{(s)}=1$. For \(r\in\{0,\ldots,\kappa\}\), by
\eqref{eqrs}, we have
   \begin{align*}
\gamma_r^{(s)} = \frac{I_r(s)+Aq^r}{I_0(s)+A} > 0, \quad s\Ge s_0.
   \end{align*}
Therefore, by \eqref{eqlt},
   \begin{equation*}
\lim_{s\to \infty} \gamma_r^{(s)} = q^r, \quad r=0,\ldots,\kappa.
   \end{equation*}
Hence, by enlarging $s_0$ if necessary, we may also assume that
   \begin{equation} \label{gml1}
0<\gamma_1^{(s)}<1 \;\; \text{and} \;\; 0<\gamma_\kappa^{(s)}<1 \;\;
\text{for all} \;\; s \Ge s_0.
   \end{equation}
We now calculate the \(p\)-step differences of
\(\{\gamma_n^{(s)}\}_{n=0}^{\infty}\) for $s \Ge s_0$:
   \allowdisplaybreaks
   \begin{align} \nonumber
D_n^{(s)} :=\gamma_{n+p}^{(s)}-\gamma_n^{(s)} &
\overset{\eqref{eqrs}}{=} \frac{1}{R_s} \left( \int_{\mathbb{R}_+}t^n
\D \beta_s(t) - mq^n \right)
   \\ \nonumber
& \overset{\eqref{rywm}}{=} \frac{1}{R_s} \left(
\int_{\mathbb{R}_+}t^n \D \alpha_s(t) - mq^n \right)
   \\ \label{dyrs}
& \hspace{1ex}= \frac{1}{R_s} \int_{\mathbb{R}_+}t^n \D
\alpha_{s,+}(t), \qquad n\in\mathbb{Z}_+.
   \end{align}
Consequently, for every $s \Ge s_0$, the sequence
$\{D_n^{(s)}\}_{n=0}^{\infty}$ is a Stieltjes moment sequence of
positive real numbers.

Now fix $s \Ge s_0$. For the remainder of the proof, we suppress the
dependence on $s$ in the notation, writing $\alpha, \beta,
\alpha_{+}, \rho, D_n, R, \gamma_n$ for $\alpha_s, \beta_s,
\alpha_{s,+}, \rho_s, D_n^{(s)}, R_s, \gamma_n^{(s)}$, respectively.

\medskip

{\sc Step 3}. {\em Construction of the composition operator
$C_{\phi}$}.

Let $X=X_{1,\kappa}$ be as in \eqref{graph-set}, and let
$\phi=\phi_{1,\kappa}$ be the self-map defined by \eqref{graph-map}.
Define the discrete measure \(\mu\) on \(X\) by
   \begin{align} \label{myzr}
   \left.
   \begin{gathered}
\mu(x_0):=1 \;\; \text{and} \;\; \mu(x_r):=\gamma_r \;\; \text{for}
\;\; r=1,\ldots,\kappa,
   \\ 
\mu(x_{1,n+1}):=D_n \;\; \text{for} \;\; n\in\mathbb{Z}_+.
   \end{gathered}
   \;\; \right\}
   \end{align}
All these numbers are finite and strictly positive. This is precisely
the construction \eqref{realization} from
\cite[Example~4.3.5]{BJJS17}.

First observe that formula \eqref{cycle-id} gives
   \begin{align}  \label{grzq}
h_{\phi^r}(x_0) = \frac{\mu(x_r)}{\mu(x_0)} \overset{\eqref{myzr}}{=}
\gamma_r, \quad r=0,\ldots,\kappa.
   \end{align}
Moreover, by \eqref{hfin} and \eqref{myzr},
   \begin{equation} \label{hfxz}
h_{\phi^n}(x_{1,1}) = \frac{\mu(x_{1,n+1})}{\mu(x_{1,1})} =
\frac{D_n}{D_0}, \quad n\in\mathbb{Z}_+.
   \end{equation}
In turn, \eqref{circuit-diff}, \eqref{hfxz} and \eqref{dyrs} yield
   \begin{align*}
h_{\phi^{n+p}}(x_0)-h_{\phi^n}(x_0) = \frac{\mu(x_{1,1})}{\mu(x_0)}
h_{\phi^n}(x_{1,1}) = D_0\frac{D_n}{D_0} = \gamma_{n+p}-\gamma_n,
\quad n \in \mathbb{Z}_+.
   \end{align*}
An induction applied separately to each residue class modulo $p$,
together with \eqref{grzq}, shows that
   \begin{equation}\label{hvgz}
h_{\phi^n}(x_0)=\gamma_n, \quad n\in\mathbb{Z}_+.
   \end{equation}

\medskip

{\sc Step 4}. {\em Generation of Stieltjes moment sequences}.

We first construct S-representing measures for all sequences of
iterated Radon-Nikodym derivatives. Combining \eqref{cycle-id} with
\eqref{myzr} and \eqref{hvgz}, we obtain
   \begin{equation*}
h_{\phi^n}(x_r) = \frac{\gamma_{n+r}}{\gamma_r}, \quad
n\in\mathbb{Z}_+, \, r = 0,\ldots,\kappa.
   \end{equation*}
Hence, by \eqref{gmsr}, the Borel probability measure $P_r$ on
$\mathbb{R}_+$ defined by
   \begin{align*}
P_r(\varDelta) = \frac{1}{\gamma_r} \int_\varDelta t^r \D \rho(t),
\quad \varDelta\in\Borel(\mathbb{R}_+),
   \end{align*}
is an S-representing measure of
\(\{h_{\phi^n}(x_r)\}_{n=0}^{\infty}\) for every $r=0,\ldots,\kappa$.

For \(j\in\mathbb{N}\), formula \eqref{hfin} yields
   \begin{equation} \label{hinm1}
h_{\phi^n}(x_{1,j}) = \frac{\mu(x_{1,n+j})}{\mu(x_{1,j})}
\overset{\eqref{myzr}}{=} \frac{D_{n+j-1}}{D_{j-1}}, \quad
n\in\mathbb{Z}_+.
   \end{equation}
By \eqref{dyrs}, the Borel probability measure \(Q_j\) given by
   \begin{align*}
Q_j(\varDelta) = \frac{1}{D_{j-1}R} \int_\varDelta t^{j-1} \D
\alpha_+(t), \quad \varDelta\in\Borel(\mathbb{R}_+),
   \end{align*}
satisfies
   \begin{align*}
\int_{\mathbb{R}_+}t^n \D Q_j(t) = \frac{D_{n+j-1}}{D_{j-1}}
\overset{\eqref{hinm1}}{=} h_{\phi^n}(x_{1,j}), \quad
n\in\mathbb{Z}_+.
   \end{align*}
Hence, we have proved that for every $x\in X$,
$\{h_{\phi^n}(x)\}_{n=0}^{\infty}$ is a Stieltjes moment sequence.
Using Lemma~\ref{wkws}, we obtain (i).

\medskip

{\sc Step 5}. {\em Failure of hyponormality of $C_{\phi}$}.

We apply the hyponormality criterion from Proposition~\ref{wkwh} at
the vertex \(x=x_\kappa\). Since $\kappa\Ge1$, we have
   \begin{align*}
\phi^{-1}(\{x_\kappa\}) = \{x_0,x_{1,1}\}, \;\;
\phi^{-1}(\{x_0\})=\{x_1\} \;\; \text{and} \;\;
\phi^{-1}(\{x_{1,1}\})=\{x_{1,2}\}.
   \end{align*}
Consequently, the left-hand side of inequality~\eqref{hypcriterion}
at \(x=x_\kappa\) satisfies
   \allowdisplaybreaks
   \begin{align*}
\frac{1}{\mu(x_\kappa)} \sum_{y\in\phi^{-1}(\{x_\kappa\})}
\frac{\mu(y)^2}{\mu\bigl(\phi^{-1}(\{y\})\bigr)} =
\frac{1}{\gamma_\kappa} \left( \frac{1}{\gamma_1} + \frac{D_0^2}{D_1}
\right) > \frac{1}{\gamma_\kappa\gamma_1} \overset{\eqref{gml1}}{>}1.
   \end{align*}
Thus, inequality~\eqref{hypcriterion} fails at \(x=x_\kappa\), so by
Proposition~\ref{wkwh}, \(C_\phi\) is not hyponormal. This proves
(ii) and completes the proof.
   \end{proof}
   \subsection{Positive solution for $\gcal_{1,0}$} \label{sec:loop}
   \begin{proof}[Proof of Theorem~\ref{kezo}]
\textup{(i)}$\Rightarrow$\textup{(ii)}. Since $\phi$ has finite
fibers, Lemma~\ref{wkws} implies that $\Dinf(C_\phi)$ is dense in
$L^2(\mu)$. By \cite[Proposition~3.2.1]{BJJS12}, $C_\phi$ generates
Stieltjes moment sequences.

\textup{(ii)}$\Rightarrow$\textup{(iii)}. This implication follows
from Lemma~\ref{wkws}.

\textup{(iii)}$\Rightarrow$\textup{(i)}. It follows from
\eqref{circuit-diff} that
\[
h_{\phi^{n+1}}(x_0) = h_{\phi^n}(x_0) + \frac{\mu(x_{1,1})}{\mu(x_0)}
h_{\phi^n}(x_{1,1}), \qquad n\in\Zp.
\]
Since $\{h_{\phi^n}(x_{1,1})\}_{n=0}^{\infty}$ is a Hamburger moment
sequence, we deduce that (cf.\ \cite[Theorem~3.8]{Sch17})
   \begin{align*}
\sum_{i,j=0}^{N} \bigl(h_{\phi^{i+j+1}}(x_0) -
h_{\phi^{i+j}}(x_0)\bigr) \lambda_i\overline{\lambda_j} =
\frac{\mu(x_{1,1})}{\mu(x_0)} \sum_{i,j=0}^{N}
h_{\phi^{i+j}}(x_{1,1})\lambda_i\overline{\lambda_j} \Ge 0,
   \end{align*}
for all \(\lambda_0,\ldots,\lambda_N\in\mathbb{C}\). This is
precisely condition~\textup{(iii)} in Proposition~\ref{dirze}. By
this proposition, \(C_\phi\) is subnormal.
   \end{proof}
   \subsection{Partial positive solutions for $\gcal_{1,\kappa}$
with $\kappa \Ge 1$} \label{sec:det}
   Theorem~\ref{cnyz} shows that Theorem~\ref{kezo} is specific to
the case \(\kappa=0\). For every \(\kappa\Ge1\), the assumption
\[
\{h_{\phi^n}(x_0)\}_{n=0}^{\infty} \quad\text{and}\quad
\{h_{\phi^n}(x_{1,1})\}_{n=0}^{\infty} \text{ are Stieltjes moment
sequences}
\]
do not even imply the hyponormality of \(C_\phi\). Additional
assumptions, such as the S-determinacy or the Carleman condition
appearing in Proposition~\ref{cpws} below, are therefore essential.
   \begin{prop} \label{cpws}
Let $\kappa\Ge 1$, let $\mu$ be a discrete measure on $X_{1,\kappa}$,
and let $C_\phi$ be the composition operator in $L^2(\mu)$ with
$\phi=\phi_{1,\kappa}$. Suppose that
$\{h_{\phi^n}(x)\}_{n=0}^{\infty}$ is a Stieltjes moment sequence for
$x\in \{x_0, x_{1,1}\}$, and at least one of the following two
conditions is satisfied{\em :}
   \begin{enumerate}
\item[(i)] the Stieltjes moment sequence
$\{\gamma_{jp}\}_{j=0}^{\infty}$ is S-determinate,
   \item[(ii)] the sequence $\{\gamma_{jp}\}_{j=0}^{\infty}$ satisfies the Carleman
condition,
   \item[(iii)] the sequence $\{\gamma_{(j+1)p}-\gamma_{jp}\}_{j=0}^{\infty}$
satisfies the Carleman condition,
   \end{enumerate}
where $p:=\kappa+1$ and $\gamma_n:=h_{\phi^n}(x_0)$ for all
$n\in\Zp$. Then $C_\phi$ is subnormal.
   \end{prop}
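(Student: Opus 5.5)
Our plan is to reduce everything to Proposition~\ref{dirze}(ii): it suffices to show that $\gammab=\{\gamma_n\}_{n=0}^{\infty}$, $\gamma_n=h_{\phi^n}(x_0)$, has an S-representing measure supported in $[1,\infty)$. The tool is the part of \cite[Lemma~4.3.1]{BJJS17} recalled in Subsection~\ref{subsec:moments}. If $\gammab$ is a Stieltjes moment sequence whose subsequence $\{\gamma_{jp}\}_{j=0}^{\infty}$ is S-determinate, and if $\{\gamma_{(j+1)p}-\gamma_{jp}\}_{j=0}^{\infty}$ is a Stieltjes moment sequence, then the unique S-representing measure of $\gammab$ lives in $[1,\infty)$.

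We first verify the hypotheses under (i). By assumption $\gammab$ is a Stieltjes moment sequence. By \eqref{circuit-diff} with $\eta=1$ and $n=jp$,
\begin{align*}
\gamma_{(j+1)p}-\gamma_{jp}=\frac{\mu(x_{1,1})}{\mu(x_0)}\,h_{\phi^{jp}}(x_{1,1}), \quad j\in\Zp.
\end{align*}
Since $\{h_{\phi^n}(x_{1,1})\}_{n=0}^{\infty}$ is a Stieltjes moment sequence, so is its subsequence indexed by multiples of $p$, again by \cite[Lemma~4.3.1]{BJJS17}. Multiplying by the positive constant $\mu(x_{1,1})/\mu(x_0)$ preserves this property. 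Hence the $p$-step differences along multiples of $p$ form a Stieltjes moment sequence. Together with the S-determinacy in (i), the recalled lemma gives an S-representing measure $\rho$ of $\gammab$ with $\supp(\rho)\subseteq[1,\infty)$. Proposition~\ref{dirze} then yields subnormality of $C_\phi$.

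Conditions (ii) and (iii) reduce to (i). The Carleman condition implies S-determinacy \cite[Proposition~2.5.1(i)]{BJJS17}, so (ii)$\Rightarrow$(i). For (iii), apply \eqref{carleman-diff} to the sequence $\boldsymbol{\sigma}=\{\gamma_{jp}\}_{j=0}^{\infty}$. We have already shown that both $\boldsymbol{\sigma}$ and its first differences $\{\gamma_{(j+1)p}-\gamma_{jp}\}_{j=0}^{\infty}$ are Stieltjes moment sequences. So Carleman for the differences transfers to $\boldsymbol{\sigma}$, giving (ii) and hence (i).

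The main point requiring care is the invocation of \cite[Lemma~4.3.1]{BJJS17} in exactly the form recalled in the paper. The S-determinacy of $\gammab$ is not used for its own sake; it ensures that the representing measure produced by the differences argument is \emph{the} representing measure of $\gammab$, which is why it is supported in $[1,\infty)$. The rest is bookkeeping of subsequences modulo $p$.
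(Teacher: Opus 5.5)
Your proposal is correct and follows the paper's proof essentially step for step. You use \eqref{circuit-diff} to show that $\{\gamma_{(j+1)p}-\gamma_{jp}\}_{j=0}^{\infty}$ is a Stieltjes moment sequence, apply \cite[Lemma~4.3.1]{BJJS17} to get the unique S-representing measure of $\gammab$ supported in $[1,\infty)$, conclude with Proposition~\ref{dirze}, and reduce (ii) and (iii) to (i) via Carleman $\Rightarrow$ S-determinacy and \eqref{carleman-diff} applied to $\{\gamma_{jp}\}_{j=0}^{\infty}$. The only cosmetic difference is that you sample $\{h_{\phi^n}(x_{1,1})\}_{n=0}^{\infty}$ at multiples of $p$ directly, instead of first noting that the full sequence $\{\gamma_{n+p}-\gamma_n\}_{n=0}^{\infty}$ is a Stieltjes moment sequence.
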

   \begin{proof}
By \eqref{circuit-diff},
   \begin{align*}
\gamma_{n+p}-\gamma_n = \frac{\mu(x_{1,1})}{\mu(x_0)}
h_{\phi^n}(x_{1,1}), \qquad n\in\mathbb{Z}_+.
   \end{align*}
Consequently, by assumption,
\[
\{\gamma_{n+p}-\gamma_n\}_{n=0}^{\infty}
\]
is a Stieltjes moment sequence.

(i) Suppose that $\{\gamma_{jp}\}_{j=0}^{\infty}$ is S-determinate.
Since \(\{\gamma_{n+p}-\gamma_n\}_{n=0}^{\infty}\) is a Stieltjes
moment sequence, its subsequence
\[
\{\gamma_{(j+1)p}-\gamma_{jp}\}_{j=0}^{\infty}
\]
is also a Stieltjes moment sequence. By
\cite[Lemma~4.3.1(ii)]{BJJS17}, recalled in
Subsection~\ref{subsec:moments}, the Stieltjes moment sequence
\(\{\gamma_n\}_{n=0}^{\infty}\) is S-determinate and that its unique
S-representing measure vanishes on \([0,1)\). Proposition~\ref{dirze}
therefore implies that \(C_\phi\) is subnormal.

(ii) This is a direct consequence of (i) and the fact that the
Carleman condition implies S-determinacy.

Finally, observe that conditions (ii) and (iii) are equivalent by
applying \eqref{carleman-diff} to the sampled sequence
$\{\gamma_{jp}\}_{j=0}^{\infty}$.
   \end{proof}
   \section{A negative solution to the injectivity problem} \label{sec:inj}
   \begin{proof}[Proof of Theorem~\ref{noninjective}]
We divide the proof into several steps.

\medskip

{\sc Step 1}. {\em Selection of two representing measures}.

Choose any S-indeterminate Stieltjes moment sequence with normalized
zeroth term, and denote by \(\widetilde{\alpha}\) and
\(\widetilde{\beta}\) the Krein and Friedrichs measures of this
sequence, respectively (see Step~1 of the proof of
Theorem~\ref{cnyz}). Then
$\widetilde{\alpha}(\mathbb{R}_+)=\widetilde{\beta}(\mathbb{R}_+)=1$.
According to \eqref{krein-gap},
   \begin{align*}
0=\inf\supp(\widetilde{\alpha}) < \inf\supp(\widetilde{\beta}) \;\;
\text{and} \;\; b:=\inf\supp(\widetilde{\beta})>0.
   \end{align*}
Choose a real number $\vartheta>\frac{1}{b}$ and let
$\psi_{\vartheta,0}$ be given by (see \eqref{affine-map})
\[
\psi_{\vartheta,0}(t)=\vartheta t, \qquad t\in\mathbb{R}.
\]
Define the Borel probability measures \(\alpha\) and \(\beta\) on
\(\mathbb{R}\) by
   \begin{align*}
\alpha = \widetilde{\alpha} \circ \psi_{\vartheta,0}^{-1} \;\;
\text{and} \;\; \beta = \widetilde{\beta} \circ
\psi_{\vartheta,0}^{-1}.
   \end{align*}
It follows from Subsection~\ref{subsec:extremal} that \(\alpha\) and
\(\beta\) are N-extremal measures of the same Stieltjes moment
sequence, and
   \begin{align} \label{inj-cvzq}
& \supp{\alpha} = \psi_{\vartheta,0}(\supp(\widetilde{\alpha}))
\subseteq \Rplus \;\; \text{and} \;\; \inf\supp(\alpha)=0,
   \\  \label{inj-smbs}
&\supp{\beta} = \psi_{\vartheta,0}(\supp(\widetilde{\beta}))
\subseteq \, [\vartheta b,\infty) \subseteq \Rplus \;\; \text{and}
\;\; \inf\supp(\beta)=\vartheta b>1.
   \end{align}
Set
   \begin{align}  \label{gncx}
\gamma_n :=\int_{\Rplus}t^n \D \alpha(t), \quad n\in\Zp.
   \end{align}
Note that $\gamma_0=1$. By Subsection~\ref{subsec:extremal}, the
support of \(\alpha\) has no accumulation point in \(\mathbb{R}\).
Hence, in view of \eqref{inj-cvzq}, the point $0$ is an isolated
point of $\supp(\alpha)$, so
   \begin{align*}
\alpha(\{0\}) \in (0,1).
   \end{align*}
Set
   \begin{align}  \label{djm1}
d_j:=\gamma_j-\gamma_{j-1}, \quad j\in\N.
   \end{align}
Because, by \eqref{inj-smbs}, $\beta$ is a probability measure
supported in \((1,\infty)\), we have
   \begin{equation}
\label{djbe} d_j = \int_{\Rplus}t^{j-1}(t-1)\,\D \beta(t)>0, \qquad
j\in\N.
   \end{equation}

\medskip

{\sc Step 2}. {\em Construction of the composition operator
$C_{\phi}$}.

Let $X=\{z,x_0,x_1,x_2,\ldots\}$, where all points are chosen to be
distinct. Define a self-map \(\phi\colon X\to X\) by
   \begin{align} \label{dzwc}
   \left.
   \begin{gathered} \phi(x_0)=x_0, \quad \phi(z)=x_0, \quad
\phi(x_1)=x_0,
   \\
\phi(x_{j+1})=x_j, \quad j\in\N.
   \end{gathered}
   \;\: \right\}
   \end{align}

Thus, $z\notin\phi(X)$. The directed graph $\gcal$ induced by $\phi$
is shown in Figure~\ref{fig:leaf}.
   \begin{figure}[htbp]
   \centering
\begin{tikzpicture}[x=1cm,y=1cm]
\node[ain vertex] (v0) at (0,0) {$x_0$}; \node[ain vertex] (v1) at
(1.65,0) {$x_1$}; \node[ain vertex] (v2) at (3.30,0) {$x_2$};
\node[ain vertex] (v3) at (4.95,0) {$x_3$}; \node[ain vertex] (vz) at
(-1.35,-1.35) {$z$}; \draw[ain edge] (v0.north west) .. controls
(-.85,1.12) and (.85,1.12) .. (v0.north east); \draw[ain edge] (v0)
-- (v1); \draw[ain edge] (v1) -- (v2); \draw[ain edge] (v2) -- (v3);
\draw[ain continuation] (v3) -- (6.05,0); \draw[ain edge] (v0) --
(vz);
\end{tikzpicture}
\caption{\label{fig:leaf}The directed graph $\gcal$: one loop, one
infinite branch, and one leaf $z$. The arrows $(\phi(y),y)$ are
directed opposite to the action~of~$\phi$.}
   \end{figure}

   Fix $r\in \bigl(0,\alpha(\{0\})\bigr)$. Define a discrete measure
\(\mu\) on \(2^X\) by
   \begin{equation} \label{jmzd}
\mu(x_0)=1, \quad \mu(z)=r, \quad \mu(x_j)=d_j \quad (j\in\N).
   \end{equation}
Hence, the composition operator $C_{\phi}$ is well defined.

\medskip

{\sc Step 3}. {\em Calculation of the Radon-Nikodym derivatives}.

Formulas \eqref{dzwc} give
   \begin{equation}\label{przn}
   \left.
   \begin{aligned}
\phi^{-n}(\{x_0\}) &= \{z,x_0,x_1,\ldots,x_n\}, && n\in\N,
   \\
\phi^{-n}(\{x_j\}) &= \{x_{j+n}\}, && n\in\Zp,\ j\in\N,
   \\
\phi^{-n}(\{z\}) &= \varnothing, && n\in\N.
   \end{aligned}
   \;\; \right\}
   \end{equation}
In turn, formulas \eqref{hfin}, \eqref{jmzd}, and \eqref{przn} yield
   \begin{align}\label{hfpr}
h_{\phi^n}(x_0) =1+r+\sum_{j=1}^{n}d_j
\overset{\eqref{djm1}}=1+r+\gamma_n-\gamma_0=\gamma_n+r, \quad
n\in\N.
   \end{align}
Similarly, \eqref{przn} gives
   \begin{equation*}
h_{\phi^n}(x_j)=\frac{d_{j+n}}{d_j},
\quad j\in\N,\ n\in\Zp,
   \end{equation*}
whereas \eqref{przn} yields
   \begin{equation}\label{hzzo}
h_{\phi^n}(z)=0,\quad n\in\N.
   \end{equation}
Of course, $h_{\phi^0}(x)=1$ for every $x\in X$.

\medskip

{\sc Step 4}. {\em $C_\phi$ generates Stieltjes moment sequences}.

Define Borel probability measures $\{P_x\}_{x\in X}$ on $\Rplus$ by
   \begin{equation} \label{pzoo}
   \left.
   \begin{aligned}
P_z&=\delta_0,
   \\
P_{x_0}&=\alpha-r\delta_0+r\delta_1,
   \\
P_{x_j}(\varDelta) &=\frac{1}{d_j}\int_{\varDelta}t^{j-1}(t-1)
\D\beta(t), \quad \varDelta\in\Borel(\Rplus),\ j\in\N.
   \end{aligned}
   \;\; \right\}
   \end{equation}
By \eqref{inj-cvzq}, the measure $P_{x_0}$ is positive because
$0<r<\alpha(\{0\})$, and its total mass is $\alpha(\Rplus)-r+r=1$.
For each $j\in \N$, the measure $P_{x_j}$ is positive because, by
\eqref{inj-smbs}, $\beta$ is supported in $(1,\infty)$, while
\eqref{djbe} shows that its total mass is one.

Formulas \eqref{gncx}, and \eqref{pzoo} imply that
   \begin{equation}\label{hfmz}
\int_{\Rplus}t^n \D P_{x_0}(t) =\gamma_n-r\, 0^n+r\,
1^n=\gamma_n+r\overset{\eqref{hfpr}}=h_{\phi^n}(x_0), \quad n\in\N.
   \end{equation}
Clearly, both sides of \eqref{hfmz} are also equal for $n=0$. Next,
observe that
   \begin{align*}
\int_{\Rplus}t^n \D P_{x_j}(t) &
\overset{\eqref{pzoo}}=\frac{1}{d_j}\int_{\Rplus}t^{n+j-1}(t-1)\,\D
\beta(t)
   \\
&\hspace{.5ex}\overset{\eqref{gncx}}=\frac{\gamma_{n+j}-\gamma_{n+j-1}}{d_j}
   \\
& \hspace{.5ex}\overset{\eqref{djm1}}=\frac{d_{n+j}}{d_j}
   \\
&\overset{\eqref{hfpr}} =h_{\phi^n}(x_j), \qquad n\in\Zp, \, j\in\N.
   \end{align*}
Finally, \eqref{hzzo} and \eqref{pzoo} give $\int_{\Rplus}t^n \D
P_z(t)=h_{\phi^n}(z)$ for all $n\in\Zp$. Thus
   \begin{equation*}
h_{\phi^n}(x)=\int_{\Rplus}t^n \D P_x(t), \quad n\in\Zp, \, x\in X,
   \end{equation*}
so $\{h_{\phi^n}(x)\}_{n=0}^{\infty}$ is a Stieltjes moment sequence
for every $x\in X$. Since, by \eqref{przn}, the self-map $\phi$ has
finite fibers, Lemma~\ref{wkws} implies that $C_{\phi}$ generates
Stieltjes moment sequences.

\medskip

{\sc Step 5}. {\em Failure of injectivity}.

Since $z\notin\phi(X)$, we have
   \[
C_\phi\chi_{\{z\}}=\chi_{\{z\}}\circ\phi=0 \;\; \text{and} \;\;
\|\chi_{\{z\}}\|^2=\mu(z)=r>0.
   \]
Hence $\chi_{\{z\}}$ is a nonzero element of $\ker C_\phi$, so
$C_\phi$ is not injective. This completes the proof.
   \end{proof}
   \begin{remark}  \label{unbco}
The operator $C_\phi$ constructed in the proof of
Theorem~\ref{noninjective} is necessarily unbounded. Indeed, if it
were bounded, then, since it generates Stieltjes moment sequences, it
would be subnormal by Lambert's characterization (see \cite{Lam76};
see also \cite[Theorem~7]{SS89} for the version without the
injectivity assumption). It would then be hyponormal and hence
injective (see \cite[Theorem~9d]{Ha-Wh84}), contrary to its
construction. Moreover, by \eqref{hzzo}, $h_\phi(z)=0$, so
\eqref{suxj} implies that $\phi$ is not surjective. Thus $C_\phi$ is
not hyponormal by Proposition~\ref{wkwh}.
   \end{remark}
   \begin{remark} \label{gstr}
The directed graph $\gcal$ in the proof of Theorem~\ref{noninjective}
is obtained from $\gcal_{1,0}$ by attaching a single leaf $z$ to the
loop vertex $x_0$ (see Figures~\ref{fig:onebranch}
and~\ref{fig:leaf}). More precisely, besides the loop $(x_0,x_0)$ and
the infinite branch $x_1,x_2,\ldots$, it has the additional edge
$(x_0,z)$. Since $\phi(X)=X\setminus\{z\}$, it follows from
\cite[Lemma~11]{BJJS18} and \eqref{hfi} that the kernel of the
composition operator $C_\phi$ is given explicitly by
   \[
\ker C_\phi=\mathbb{C}\cdot\chi_{\{z\}}.
   \]

If the leaf $z$ is removed while all the remaining point masses of
$\mu$ are left unchanged, where $\mu$ is as in \eqref{jmzd}, the
resulting composition operator is over $\gcal_{1,0}$ with symbol
$\phi_{1,0}$. Using \eqref{gncx} and \eqref{hfpr}, we deduce that the
iterated Radon-Nikodym derivatives
$\{h_{\phi_{1,0}^n}(x_0)\}_{n=0}^{\infty}$, calculated with respect
to the restriction of $\mu$ to $X_{1,0}$, form the Stieltjes moment
sequence $\{\gamma_n\}_{n=0}^{\infty}$ represented by $\alpha$,
whereas, for every $j\in\N$, the sequence
$\{h_{\phi_{1,0}^n}(x_j)\}_{n=0}^{\infty}$ is a Stieltjes moment
sequence represented by $P_{x_j}$ (see \eqref{pzoo}). Hence, by
Theorem~\ref{kezo}, the composition operator $C_{\phi_{1,0}}$ is
subnormal.

The role of the leaf can now be seen directly. Attaching $z$ with
mass $\mu(z)=r$ changes the sequence of iterated Radon-Nikodym
derivatives at $x_0$ from $\{\gamma_n\}_{n=0}^{\infty}$~to
   \[
1,\ \gamma_1+r,\ \gamma_2+r,\ldots.
   \]
The measure $P_{x_0}$ in \eqref{pzoo} represents precisely this new
sequence: it is obtained from $\alpha$ by partially transferring the
mass concentrated at $0$ to the point $1$. Thus, the atom of the
Krein measure at zero makes it possible to attach the leaf, thereby
destroying injectivity while preserving the Stieltjes moment
property.
   \end{remark}


\begin{thebibliography}{99}

\bibitem{AlCar65} W.~A. Al-Salam, L. Carlitz, \emph{Some orthogonal $q$-polynomials}, Math.
Nachr. {\bf 30} (1965), 47--61.

\bibitem{B25}
C.~Berg, \emph{On the entropy for indeterminate moment problems},
preprint, arXiv:2511.12684v2 (2025).
https://arxiv.org/abs/2511.12684v2.

\bibitem{BCR84}
C.~Berg, J.~P.~R. Christensen, and P.~Ressel,
\emph{Harmonic Analysis on Semigroups}, Springer, Berlin, 1984.

\bibitem{BS26} C. Berg, and R. Szwarc, \emph{Special N-extremal solutions to
indeterminate moment problems}, Complex Anal. Oper. Theory
\textbf{20} (2026), Article 138. DOI: 10.1007/s11785-026-02000-9.

\bibitem{BV94}
C.~Berg, and G.~Valent, \emph{The Nevanlinna parametrization for some
indeterminate Stieltjes moment problems associated with birth and
death processes}, Methods Appl. Anal. \textbf{1} (1994), 169--209.

\bibitem{BJJS12} P.~Budzy\'nski, Z.~J. Jab{\l}o\'nski, I.~B. Jung,
and J.~Stochel, \emph{Unbounded subnormal weighted shifts on directed
trees}, J. Math. Anal. Appl. \textbf{394} (2012) 819--834.

\bibitem{BJJS14}
P.~Budzy\'nski, Z.~J. Jab{\l}o\'nski, I.~B. Jung, and J.~Stochel,
\emph{On unbounded composition operators in $L^2$-spaces},
Ann. Mat. Pura Appl. (4) \textbf{193} (2014), 663--688.

\bibitem{BJJS15}
P.~Budzy\'nski, Z.~J. Jab{\l}o\'nski, I.~B. Jung, and J.~Stochel,
\emph{Unbounded subnormal composition operators in $L^2$-spaces},
J. Funct. Anal. \textbf{269} (2015), 2110--2164.

\bibitem{BJJS17}
P.~Budzy\'nski, Z.~J. Jab{\l}o\'nski, I.~B. Jung, and J.~Stochel,
\emph{Subnormality of unbounded composition operators over
one-circuit directed graphs: Exotic examples},
Adv. Math. \textbf{310} (2017), 484--556.

\bibitem{BJJS18}
P.~Budzy\'nski, Z.~J. Jab{\l}o\'nski, I.~B. Jung, and J.~Stochel,
\emph{Unbounded Weighted Composition Operators in $L^2$-Spaces},
Lecture Notes in Mathematics, vol.~2209, Springer, Cham, 2018.

\bibitem{Chi68} T.~S. Chihara, \emph{On determinate Hamburger moment problems}, Pacific J.
Math. {\bf 27} (1968), 475--484.

\bibitem{Chr03} J.~S. Christiansen, \emph{The moment problem
associated with the Stieltjes--Wigert polynomials}, J. Math. Anal.
Appl. \textbf{277} (2003), 218--245.

\bibitem{Con91} J.~B. Conway, \emph{The theory of
subnormal operators}, Mathematical Surveys and Monographs, {\bf 36},
American Mathematical Society, Providence, RI, 1991.

\bibitem{Ha-Wh84} D. Harrington, and R. Whitley, \emph{Seminormal composition operators}, J.
Operator Theory \textbf{11} (1984), 125--135.

\bibitem{IsMa94} M.~E.~H. Ismail, and D.~R. Masson, \emph{$q$-Hermite polynomials, biorthogonal
rational functions, and $q$-beta integrals}, Trans. Amer. Math. Soc.
{\bf 346} (1994), 63--116.

\bibitem{JJS12}
Z.~J. Jab{\l}o\'nski, I.~B. Jung, and J.~Stochel,
\emph{A non-hyponormal operator generating Stieltjes moment
sequences}, J. Funct. Anal. \textbf{262} (2012), 3946--3980.

\bibitem{Lam76}
A.~Lambert, \emph{Subnormality and weighted shifts},
J. London Math. Soc. (2) \textbf{14} (1976), 476--480.

\bibitem{Pe95} H.~L. Pedersen, \emph{Stieltjes moment problems and the Friedrichs extension
of a positive definite operator}, J. Approx. Theory 83 (1995)
289--307.

\bibitem{Sch17} K. Schm\"{u}dgen, \emph{The moment
problem}, Graduate Texts in Mathematics 277, Springer, 2017.

\bibitem{Simon98}
B.~Simon, \emph{The classical moment problem as a self-adjoint
finite difference operator}, Adv. Math. \textbf{137} (1998), 82--203.

\bibitem{SS89}
J.~Stochel, and F.~H. Szafraniec, \emph{On normal extensions of
unbounded operators. II}, Acta Sci. Math. (Szeged) \textbf{53}
(1989), 153--177.

\bibitem{Stieltjes}
T.-J.~Stieltjes, \emph{Recherches sur les fractions continues},
Ann. Fac. Sci. Toulouse (1) \textbf{8} (1894), J1--J122;
\emph{Recherches sur les fractions continues [Suite et fin]},
\textbf{9} (1895), A5--A47.

\end{thebibliography}
\end{document}